\newcommand{\cC}{{\mathcal{C}}}
\newcommand{\cF}{{\mathcal{F}}}
\newcommand{\cL}{{\mathcal{L}}}
\newcommand{\cR}{{\mathcal{R}}}
\newcommand{\cS}{{\mathcal{S}}}
\newcommand{\cX}{{\mathcal{X}}}
\newcommand{\RR}{\mathbb{R}}
\newcommand{\sign}{\mathrm{sign}}
\newcommand{\dom}{{\mathrm{dom}}} % domain
\newcommand{\Proj}{{\mathrm{Proj}}}
\newcommand{\prj}{{\mathrm{prj}}}
\DeclareMathOperator{\shrink}{shrink} % shrinkage
\newcommand{\bc}{\begin{center}}
\newcommand{\ec}{\end{center}}
\newcommand{\bdm}{\begin{displaymath}}
\newcommand{\edm}{\end{displaymath}}
\newcommand{\beq}{\begin{equation}}
\newcommand{\eeq}{\end{equation}}
\newcommand{\bfl}{\begin{flushleft}}
\newcommand{\efl}{\end{flushleft}}
\newcommand{\bt}{\begin{tabbing}}
\newcommand{\et}{\end{tabbing}}
\newcommand{\beqn}{\begin{eqnarray}}
\newcommand{\eeqn}{\end{eqnarray}}
\newcommand{\beqs}{\begin{align*}} % no equation numbers
\newcommand{\eeqs}{\end{align*}}  % no equation numbers
\newtheorem{theorem}{Theorem}
\newtheorem{definition}{Definition}
\newtheorem{remark}{Remark}
\newtheorem{lemma}{Lemma}
\newtheorem{example}[remark]{Example}
\begin{document}

\title{Gradient methods for convex minimization: better rates under weaker conditions}

\author{Hui Zhang\thanks{Department of Mathematics and Systems Science,
College of Science, National University of Defense Technology,
Changsha, Hunan,  China. Email: {hhuuii.zhang@gmail.com}}
\and Wotao Yin\thanks{Department of Computational and Applied Mathematics, Rice University, Houston, Texas, US. Email: {wotao.yin@rice.edu} }
}
\date{\today}

\maketitle

\begin{abstract}
The convergence behavior of gradient methods for minimizing convex differentiable functions is one of the core questions in convex optimization. This paper shows that their well-known complexities can be achieved under conditions weaker than the commonly accepted ones. We relax the common gradient Lipschitz-continuity condition and strong convexity condition to ones that hold only over certain line segments. Specifically, we establish complexities $O(\frac{R}{\epsilon})$ and $O(\sqrt{\frac{R}{\epsilon}})$ for the ordinary and accelerated gradient methods, respectively, assuming that $\nabla f$ is Lipschitz continuous with constant $R$ over the line segment joining $x$ and $x-\frac{1}{R}\nabla f$ for each $x\in\dom f$. Then we improve them to $O(\frac{R}{\nu}\log(\frac{1}{\epsilon}))$ and $O(\sqrt{\frac{R}{\nu}}\log(\frac{1}{\epsilon}))$ for function $f$ that also satisfies the secant inequality $\langle \nabla f(x), x- x^*\rangle\ge \nu\|x-x^*\|^2$ for each $x\in \dom f$ and its projection $x^*$ to the minimizer set of $f$. The secant condition is also shown to be necessary for the geometric decay of solution error. Not only are the relaxed conditions met by more functions, the restrictions give smaller $R$ and larger $\nu$ than they are without the restrictions and thus lead to better complexity bounds. We apply these results to sparse optimization and demonstrate a faster algorithm.

\end{abstract}

\textbf{Keywords:} sublinear convergence, linear convergence, restricted Lipschitz continuity, restricted strong convexity, Nesterov acceleration, restart technique, skipping technique, sparse optimization.

\section{Introduction}
Owing  much to the fast development in
signal/image processing, compressive sensing, statistical and machine learning, and parallel computing, we have witnessed the  (revived) popularity of gradient methods, which are easy to program, have relatively low per-iteration complexities, and are often among the best options for obtaining moderately accurate solutions for large-scale optimization problems.

This paper considers the convex unconstrained optimization problem: \begin{equation}\label{opt1}
 f^*:=\min_{x\in \RR^n} f(x)
\end{equation}
where $f: \RR^n\rightarrow \RR$ is a differentiable convex function.  We assume throughout the paper that the set of optimal solutions $\mathcal{X}^*$ is nonempty and closed and thus  $f^*\in \RR$ is attainable.  For simplicity, we assume $\dom f=\RR^n$.
Most of the discussions in this paper hold if we impose $x\in\dom f$ rather than $x\in \RR^n$.

The gradient descent iteration is
\beq\label{graddes}x^{(k+1)}= x^{(k)}-\tau \nabla f(x^{(k)}).
\eeq
Its convergence rates have been established for two major classes of functions  \cite{n1,n2,n21}: The first class, denoted by $\mathcal{F}_L(\RR^n)$, consists of the convex functions with Lipschitz continuous gradients, namely,
\begin{align}
\nonumber
f\in \mathcal{F}_L(\RR^n)~\Longleftrightarrow~& f~\mbox{is differentiable~and}\\
&\|\nabla f(x)-\nabla f(y)\|\leq L\|x-y\|, \quad\forall x, y\in\RR^n,\label{Lip}
\end{align}
where $L>0$ is the  Lipschitz constant of $\nabla f$; the second class, denoted by $\mathcal{S}_{L,\mu}(\RR^n)$, is a subclass of $\cF_L(\RR^n)$ in which the functions are also strongly convex, namely,
\begin{align}
\nonumber
f\in \mathcal{S}_{\mu,L}(\RR^n)~\Longleftrightarrow~& f\in \mathcal{F}_L(\RR^n)~\mbox{and}\\
&\langle \nabla f(x)-\nabla f(y), x-y\rangle \geq \mu \|x-y\|^2, \quad\forall x, y\in\RR^n,\label{SC}
\end{align}
where $\mu>0$ is  the convex modulus  of $f$. Geometrically, if $f\in \mathcal{F}_L$,  $\nabla f$ cannot change too quickly; the curvature of $f$ (assuming $f\in C^2$) is upper bounded by $L$. If $f\in S_{\mu,L}$,   $\nabla f$ cannot change too slowly either; the curvature of $f$ (assuming $f\in C^2$) is lower bounded by $\mu$. One might be more familiar certain equivalent conditions of  \eqref{Lip} and \eqref{SC}.

\begin{table}[ht]
\begin{center}\begin{tabular}{c|c|c|c}\hline
function & 1st-order oracle & ordinary gradient & accelerated gradient\\
class & lower bound & method & method\\\hline
~&~ &~ & ~\\
$\cF_L(\RR^n)$ & $O\left(\sqrt\frac{L}{{\epsilon}}\right)$ & $O\left(\frac{L}{\epsilon}\right)$ &  $O\left(\sqrt\frac{L}{{\epsilon}}\right)$ \\[15pt]
$\cS_{L,\mu}(\RR^n)$ & $O\left(\sqrt{\frac{L}{\mu}}\log\frac{1}{\epsilon}\right)$ & $O\left(\frac{L}{\mu}\log\frac{1}{\epsilon}\right)$ &  $O\left(\sqrt{\frac{L}{\mu}}\log\frac{1}{\epsilon}\right)$\\[10pt]\hline
\end{tabular}\end{center}
\caption{Complexities of minimizing a convex differentiable function to $\epsilon$-accuracy}\label{complexity}
\end{table}

For any $f\in \cF_L$, iteration \eqref{graddes} reduces $f^k=f(x^{(k)})$ at the rate of $O(\frac{L}{k})$; hence, it takes $O(\frac{L}{\epsilon})$ iterations to guarantee $f^k< f^*+\epsilon$. For any $f\in S_{\mu,L}$, the rate is improved to $O(\frac{L-\mu}{L+\mu})^{2k}$. Therefore, it only takes $O(\frac{L}{\mu}\log(\frac{1}{\epsilon}))$ iterations.

In the seminal paper \cite{n1}, Nesterov presents an accelerated gradient descent iteration. For functions in $\cF_L$, its complexity is $O(\sqrt{\frac{L}{\epsilon}})$.  In papers \cite{n2,n3}, he generalizes the method to more  function classes. In particular, if $f\in S_{\mu,L}$, the complexity is $O(\sqrt{\frac{L}{\mu}}\log(\frac{1}{\epsilon}))$. He  gives examples of functions on which no gradient-based methods can perform fundamentally better. So, his method has  the optimal worst-case complexities; for more detail, see  book \cite{n21}.
The complexities discussed above are summarized in Table \ref{complexity}.

\subsection{Contributions}
We show that global Lipschitz continuity of $\nabla f$ is not  necessary for deriving the sublinear bounds in Table \ref{complexity}. If $\nabla f$ is  Lipschitz continuous with constant $R>0$ restricted to the line segments joining $x$ and $x-(1/R)\nabla f(x)$, for $x=x^{(0)},x^{(1)},\ldots$, or simply $x\in\RR^n$, then the ordinary and accelerated gradient descent methods have complexities $O(R/\epsilon)$ and $O(\sqrt{R/\epsilon})$, respectively. We believe that some researchers, especially those who study line search methods, might be aware of this result though we do not find it in the literature. Our analysis in fact hints a backtracking line search method that achieves the same complexities without the knowledge of $R$. It is worth noting that the recent paper \cite{SGB} presents a skillful line search method that improves the Nesterov's accelerated gradient method.

On the other hand, the Lipschitz continuity of $\nabla f$ alone gives at best  the rather weak $O(1/\epsilon)$ and $O(1/\sqrt{\epsilon})$ complexities.  It is commonly know that the strong convexity of $f$ enables the much better complexity of $O(\log(1/\epsilon))$. However, most convex functions are not strongly convex. Hence, it is interesting to relax the conditions and still establish a linear convergence rate. We show that an inequality resembling \eqref{SC} but  concerning just the secant between $x$ and its projection to $\cX^*$ is ultimately responsible for linear convergence. The inequality imposes a positive lower bound on the \emph{average curvature} between $x$ and the solution set and is shown to be both sufficient and necessary for the geometric decay of solution error.

\subsection{Outline of the paper}

The rest of the paper is organized as follows. Section \ref{weakconds} defines new properties of functions along with examples and discussions. Section \ref{sc:main} describes the convergence and complexity results. Section \ref{sc:augl1} applies these results to the augmented $\ell_1$ model and presents numerical results of sparse signal recovery. Finally, Section \ref{sc:concl} concludes this paper.

\section{Weakened conditions}\label{weakconds}
For any two vector $u, v\in \RR^n$, we let the set of points on the line segment between $u$ and $v$ be denoted by $\lfloor u, v\rfloor$, i.e., $$\lfloor u, v\rfloor=\{w\in \RR^n: w=\lambda u+(1-\lambda)v, 0\leq \lambda \leq 1\}.$$
\begin{definition}[Restricted Lipschitz-continuous gradient -- RLG($R$)]\label{rlg}
A function $f(x):\RR^n\rightarrow \RR$ has a restricted Lipschitz-continuous gradient (RLG) with constant $R\ge 0$
if it is differentiable and obeys
\beq\label{eq:rlg}
\|\nabla f(x)-\nabla f(y)\|\le R \|x-y\|,\quad \forall (x,y)\in\Omega,
\eeq
where
\beq\label{Omeg}\Omega = \bigcup_{z\in \RR^n}\{(x,y):x,y\in \lfloor z,z-(1/R)\nabla f(z)\rfloor\}.
\eeq
\end{definition}
This definition requires $\nabla f$ not to change too quickly over the specified downhill line segments \eqref{Omeg}. Constant $R$ can generally be smaller than the global Lipschitz constant $L$.

\begin{definition}[Restricted secant inequality -- RSI($\nu$)] \label{rsi}
A function $f(x):\RR^n\rightarrow \RR$ satisfies the restricted secant inequality (RSI)  with constant $\nu>0$ if it is differentiable and obeys
\begin{equation}\label{eq:rsi}
\langle \nabla  f(x)- \nabla f(x_{\prj}), x-{x_{\prj}}\rangle \geq \nu \|x-{x_{\prj}}\|^2,
\end{equation}
where ${x_{\prj}}=\Proj_{\cX^*}(x)$ is the projection of $x$ onto the solution set $\mathcal{X}^*$. Such $f$ is called an RSI function.
\end{definition}
%Throughout the remainder of this paper, we let $x_{\prj}$ denote the projection of  a point $x$ to the solution set $\mathcal{X}^*$ of $\min f(x)$.
Note that $\nabla f(x_{\prj})=0$ by definition. Constant $\nu$ can be viewed as a lower bound of the average curvature of $f$ between $x$ and $x_\prj$.  Since the goal of minimization is to reach the solution set $\mathcal{X}^*$, in order to have linear convergence, it turns out only the ``average minimum curvature'' between the current  $x$ and its projection $x_\prj$  matters. Using RSI, we introduce restricted strongly convex (RSC) functions.
\begin{definition}[Restricted strong convexity -- RSC($\nu$)]\label{rsc} A function $f(x):\RR^n\rightarrow \RR$ is restricted strongly convex with constant $\nu>0$ if it is convex,  has a finite minimizer, and satisfies RSI($\nu$).
\end{definition}
RSC is  weaker than strong convexity as \eqref{eq:rsi} is a relaxation to inequality \eqref{SC}. Some of our convergence results will be given for the following new classes of functions.

\begin{definition}[New function classes] Let $R, \nu>0$. Define function classes
\begin{align*}
\cL_{R}(\RR^n)&:=\{f:\RR^n\to\RR \mid f~\mbox{is convex and RLG}(R)\},\\
\cR_{R,\nu}(\RR^n)&:=\{f\in \cL_{R}(\RR^n)\mid f~\mbox{is RSC}(\nu)\},\\
\hat{\cR}_{L,\nu}(\RR^n)&:=\{f\in \cF_{L}(\RR^n)\mid f~\mbox{is RSC}(\nu)\}.
\end{align*}
\end{definition}

By definition, if $\mu\geq \nu$ and $L= R$, then we have
$$\mathcal{S}_{L,\mu}(\RR^n)\begin{array}{lcr}\subset &\mathcal{R}_{R,\nu}(\RR^n)&\subset\\
\subset &\hat{\cR}_{L,\nu}(\RR^n)\subset\mathcal{F}_{L}(\RR^n)&\subset \end{array}   \mathcal{L}_{R}(\RR^n).$$

Definition \ref{rsc} is different from another recent definition of restricted strong convexity from \cite{NRWY}.
\begin{definition}[Restricted strong convexity of \cite{NRWY}]\label{rsc2}
A function $f(x):\RR^n\rightarrow \RR$ satisfies the restricted strong convexity at $x_0$ with constants $\kappa_1, \kappa_2>0$ and tolerance function $r(x)$ if it is differentiable and
\begin{equation}
f(x_0+\delta)-f(x_0)-\langle f'(x_0),\delta\rangle \geq \kappa_1\|\delta\|^2-\kappa_2 (r(x_0))^2,
\end{equation}
for all $\delta \in \cC$, where $\cC$ is a certain point set.
\end{definition}
Definition \ref{rsc2} is a local and weakened version of strong convexity. With  $r(x)=0$ and $\mathbb{C}=\RR^n$, it reduces to the standard strong convexity. %In high-dimensional statistical inference, some loss functions that fail to have strong convexity still meet the condition.

Many of the recent algorithms for sparse optimization are observed to converge quickly, at least on  problems that are not severely ``ill-conditioned''; however, their underlying objective functions are not strongly convex -- a property commonly used to ensure global linear convergence. When $A$ has more columns than rows, a function in the form of $g(Ax-b)$, even with a strongly convex function  $g$, is ``flat'' along many directions. Gradients along these directions are small, so minimization can progress very slowly. However, in problems with certain types of  $A$ and an additional regularization function $r(x)$ such as the $\ell_1$-norm, moving along these directions will significantly change   $r(x)$. We believe this has motivated the definition of restricted strong convexity in \cite{ANW}, which extends the ordinary definition by including the relaxation term involving  $r(x)$. That paper argues that, with high probability for problems with $A$ that is random or satisfies certain restricted eigenvalue   properties, Definition \ref{rsc2} is satisfied by  $f(x)= g(Ax-b)+r(x)$, and as a result, the prox-linear or gradient-projection iteration has a (nearly-)linear convergence behavior, specifically,
$$\|x^{(k+1)}-x^*\|^2\le c^k\|x^{(0)}-x^*\|^2 + o(\|x^*-x^o\|^2),$$
where $c<1$, $x^*$ and $x^o$ are the  minimizer and underlying true signal, respectively, and $x^{(k)}$ stands for the $k$th iterate.
Our paper focuses on the minimization of  convex differentiable functions in the general setting and establishes unmodified sublinear and linear convergence  without a probabilistic argument.

\subsection{Properties}
This subsection gives the core lemmas for establishing the main convergence results.
\begin{lemma}\label{lem01}
Let $\mathcal{X}^*$ be the nonempty solution set of \eqref{opt1}. If $f\in \cL_{R}(\RR^n)$ with $R>0$,
then we have

1) For any $(x,y) \in\Omega$ given in (\ref{Omeg}), it holds
\begin{equation}\label{Key01}
f(y)-f(x)-\langle \nabla f(x), y-x\rangle\leq \frac{R}{2}\|x-y\|^2;
\end{equation}

2) For any $y\in\mathcal{X}^*$, it holds
\begin{equation}\label{Key02}
\frac{1}{2R}\|\nabla f(x)\|^2\leq \langle \nabla f(x), x-y\rangle.
\end{equation}
\end{lemma}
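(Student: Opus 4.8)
The plan is to derive both inequalities from the restricted Lipschitz-continuity condition \eqref{eq:rlg} by integrating along the specified downhill line segment. For part 1), fix $(x,y)\in\Omega$; by definition of $\Omega$ in \eqref{Omeg}, both $x$ and $y$ lie on some segment $\lfloor z, z-(1/R)\nabla f(z)\rfloor$, and—this is the point that needs a moment's care—the whole sub-segment $\lfloor x,y\rfloor$ lies in that same segment, hence every pair of points on $\lfloor x,y\rfloor$ is again in $\Omega$. I would write $f(y)-f(x)-\langle\nabla f(x),y-x\rangle = \int_0^1 \langle \nabla f(x+t(y-x))-\nabla f(x),\,y-x\rangle\,dt$, bound the integrand by Cauchy–Schwarz and then by \eqref{eq:rlg} applied to the pair $(x+t(y-x),x)\in\Omega$, giving $\le \int_0^1 R t\|y-x\|^2\,dt = \tfrac{R}{2}\|x-y\|^2$. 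This is the standard ``descent lemma'' argument, localized to $\Omega$.

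For part 2), the natural route is to apply \eqref{Key01} at the specific pair $(x,\,x-\tfrac1R\nabla f(x))$, which by construction belongs to $\Omega$. This yields $f\!\left(x-\tfrac1R\nabla f(x)\right) \le f(x) - \tfrac1R\|\nabla f(x)\|^2 + \tfrac{1}{2R}\|\nabla f(x)\|^2 = f(x) - \tfrac{1}{2R}\|\nabla f(x)\|^2$. Since $y\in\cX^*$ minimizes $f$, we have $f(y)\le f(x-\tfrac1R\nabla f(x))$, so $f(x) - f(y) \ge \tfrac{1}{2R}\|\nabla f(x)\|^2$. It then remains to bound $f(x)-f(y)$ from above by $\langle\nabla f(x),x-y\rangle$, which is exactly the gradient inequality for convex $f$ (available since $f\in\cL_R(\RR^n)$ is convex). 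Chaining the two gives \eqref{Key02}.

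The one genuine obstacle is the geometric claim used in part 1): that if $x,y$ both lie on the segment $\lfloor z, z-(1/R)\nabla f(z)\rfloor$, then the intermediate points $x+t(y-x)$ also lie on it, so that \eqref{eq:rlg} is applicable to $(x+t(y-x),x)$. This is true because a sub-segment of a segment is a segment with the same endpoints' convex hull, but it should be stated explicitly so the integral estimate is justified; without it one cannot invoke the restricted Lipschitz bound at the interior evaluation points. Everything else is routine: the convexity inequality $f(x)-f(y)\le\langle\nabla f(x),x-y\rangle$ and the minimality of $y$ require no work beyond citing the hypotheses.
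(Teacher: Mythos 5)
Your proposal is correct and follows essentially the same route as the paper's proof: the localized descent-lemma integral for part 1) and, for part 2), applying \eqref{Key01} at the pair $(x,\,x-\tfrac{1}{R}\nabla f(x))\in\Omega$, then using minimality of $y$ and the convexity gradient inequality. Your explicit remark that the intermediate points $x+t(y-x)$ stay on the same segment $\lfloor z, z-(1/R)\nabla f(z)\rfloor$, so that \eqref{eq:rlg} applies to $(x+t(y-x),x)$, is a point the paper leaves implicit, but it is the same argument.
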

\begin{proof}
For any $(x,y) \in\Omega$, \eqref{Key01} follows from
\begin{align*}
f(y) &=f(x)+\int_0^1 \langle\nabla f(x+\tau(y-x)), y-x\rangle d\tau\\
&=f(x)+\langle \nabla f(x), y-x\rangle + \int_0^1 \langle\nabla f(x+\tau(y-x))- \nabla f(x), y-x\rangle d\tau\\
 &\leq f(x)+\langle \nabla f(x), y-x\rangle + \int_0^1 \|\nabla f(x+\tau(y-x))- \nabla f(x)\| \| y-x\| d\tau\\
  &\leq f(x)+\langle \nabla f(x), y-x\rangle + \frac{R}{2}\|x-y\|^2,
\end{align*}
where the first inequality follows from the Cauchy-Schwartz inequality and the second one follows from the definition of  RLG. For part 2), for any $y\in\mathcal{X}^*$ we have
\begin{align*}
f^*=f(y) &\le f(x-R^{-1} \nabla f(x))\\
&\le f(x) + \langle \nabla f(x),(x-R^{-1}\nabla f(x))-x\rangle +\frac{R}{2}\| (x-R^{-1}\nabla f(x))-x\|^2\\
& = f(x) - (2R)^{-1}\|\nabla f(x)\|^2,
\end{align*}
where the second inequality follows from part 1).
Therefore, we have
$$\frac{1}{2R}\|\nabla f(x)\|^2\le f(x)-f(y)\le \langle \nabla f(x),x-y\rangle,$$
where the second inequality utilizes the convexity of $f$.
\end{proof}

Note that for general $y$, the inequality (\ref{Key02}) does not hold. For example, setting $y= x-\eta \nabla f(x)$ with $0<\eta<\frac{1}{2R}$ and assuming $\nabla f(x)\neq 0$ give $\langle \nabla f(x), x-y\rangle=\eta\cdot \|\nabla f(x)\|^2<\frac{1}{2R}\|\nabla f(x)\|^2$.

\begin{lemma}
Let $\mathcal{X}^*$ be the nonempty solution set of \eqref{opt1}. If $f\in \cR_{R,\nu}(\RR^n)$ with $R>0$ and $\nu>0$, then for every $\theta\in [0,1]$ the following holds:
\begin{equation}\label{Key}
\langle\nabla f(x)-\nabla f({x_\prj}), x-{x_\prj}\rangle \geq \frac{\theta}{2R}\|\nabla f(x)-\nabla f({x_\prj})\|^2+ (1-\theta)\nu \|x-{x_\prj}\|^2,
\end{equation}
where $x_\prj$ is the projection of $x$ onto the solution set $\mathcal{X}^*$.
\end{lemma}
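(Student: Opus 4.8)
The plan is to derive \eqref{Key} as a convex combination of two inequalities that are already available. Since $x_\prj=\Proj_{\cX^*}(x)$ lies in $\mathcal{X}^*$, we have $\nabla f(x_\prj)=\vzero$, so the asserted inequality is equivalent to
\[
\langle\nabla f(x),x-x_\prj\rangle\ \ge\ \frac{\theta}{2R}\|\nabla f(x)\|^2+(1-\theta)\,\nu\,\|x-x_\prj\|^2 .
\]
Both terms on the right will be bounded by the \emph{same} quantity $\langle\nabla f(x),x-x_\prj\rangle$, which is exactly what makes the convex combination work for every $\theta\in[0,1]$.

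First I would invoke part~2) of Lemma~\ref{lem01} with the choice $y=x_\prj$, which is admissible because $x_\prj\in\mathcal{X}^*$; this yields $\frac{1}{2R}\|\nabla f(x)\|^2\le\langle\nabla f(x),x-x_\prj\rangle$. Next, since $f\in\cR_{R,\nu}(\RR^n)$ is in particular RSC($\nu$), Definition~\ref{rsi} applied at $x$ gives $\nu\|x-x_\prj\|^2\le\langle\nabla f(x)-\nabla f(x_\prj),x-x_\prj\rangle=\langle\nabla f(x),x-x_\prj\rangle$, again using $\nabla f(x_\prj)=\vzero$.

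Finally, I would multiply the first inequality by $\theta$ and the second by $1-\theta$ and add them. Because $\theta+(1-\theta)=1$, the left-hand sides combine to $\langle\nabla f(x),x-x_\prj\rangle$, while the right-hand sides sum to exactly $\frac{\theta}{2R}\|\nabla f(x)\|^2+(1-\theta)\nu\|x-x_\prj\|^2$, which is the claim. There is no genuine obstacle in this argument; the only points requiring attention are the verification that Lemma~\ref{lem01}(2) is being applied with its second argument in $\mathcal{X}^*$ (true for $x_\prj$ by construction) and that the two ingredient inequalities share an identical inner-product expression, so that forming their $\theta$-weighted sum is legitimate throughout $[0,1]$.
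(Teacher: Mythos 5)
Your argument is correct and matches the paper's own proof: both invoke part 2) of Lemma \ref{lem01} with $y=x_\prj\in\mathcal{X}^*$ to get $\frac{1}{2R}\|\nabla f(x)\|^2\le\langle\nabla f(x),x-x_\prj\rangle$, then the RSC($\nu$) inequality, and conclude by taking the $\theta$/$(1-\theta)$ convex combination. You merely spell out explicitly the combination step that the paper leaves implicit.
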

\begin{proof}
Obviously, $x_\prj \in \cX^*$ and $\nabla f(x_\prj)=0$. Thus, from part 2) of Lemma \ref{lem01}, we have
\begin{equation}\label{Key1}
\langle\nabla f(x)-\nabla f({x_\prj}), x-{x_\prj}\rangle \geq \frac{1}{2R}\|\nabla f(x)-\nabla f({x_\prj})\|^2.
\end{equation}
On the other hand, from the definition of RSC($\nu$), we obtain
\begin{equation}\label{Key2}
\langle\nabla f(x)-\nabla f({x_\prj}), x-{x_\prj}\rangle \geq \nu \|x-{x_\prj}\|^2.
\end{equation}
Inequality \eqref{Key} follows from (\ref{Key1}) and (\ref{Key2}).
\end{proof}

Parameter $\theta$ in (\ref{Key}) will be optimized to obtain a convergence bound.

\begin{lemma}\label{lem2}
Let $f(x)$ satisfy  RSI$(\nu)$, $\nu>0$, and $\mathcal{X}^*$ be the nonempty solution set. For $\forall x\in \RR^{m}$ we have
\begin{equation}\label{RSC1}
f(x)-f({x_\prj})\geq \frac{\nu}{2}\|x-{x_\prj}\|^2,
\end{equation}
where $x_\prj$ is the projection of $x$ onto the solution set $\mathcal{X}^*$.
\end{lemma}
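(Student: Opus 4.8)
The plan is to reduce the inequality to a one-dimensional integration along the segment joining $x$ to its projection. Write $x_\prj = \Proj_{\cX^*}(x)$; if $x\in\cX^*$ then $x=x_\prj$ and the claim reads $0\ge 0$, so assume $x\neq x_\prj$. Set $x_t = x_\prj + t(x-x_\prj)$ for $t\in[0,1]$. Since $f$ is convex and differentiable (hence $\nabla f$ is continuous), the fundamental theorem of calculus gives
\[
f(x)-f(x_\prj)=\int_0^1\langle\nabla f(x_t),\,x-x_\prj\rangle\,dt,
\]
and it remains to bound the integrand from below by $\nu t\|x-x_\prj\|^2$.

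The key step -- and the only non-routine one -- is to show that $x_\prj$ is also the projection of every intermediate point $x_t$ onto $\cX^*$. Because $f$ is convex, $\cX^*$ is closed and convex, so $\Proj_{\cX^*}$ is characterized by the variational inequality $\langle x-x_\prj,\,y-x_\prj\rangle\le 0$ for all $y\in\cX^*$; multiplying by $t\ge 0$ gives $\langle x_t-x_\prj,\,y-x_\prj\rangle\le 0$ for all $y\in\cX^*$, which is exactly the statement that $\Proj_{\cX^*}(x_t)=x_\prj$. Moreover $\nabla f(x_\prj)=0$ by first-order optimality. Applying RSI$(\nu)$ at the point $x_t$ (whose projection is $x_\prj$) therefore yields
\[
\langle\nabla f(x_t),\,x_t-x_\prj\rangle\ge\nu\|x_t-x_\prj\|^2 .
\]

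Finally, since $x_t-x_\prj=t(x-x_\prj)$, the last display becomes $t\langle\nabla f(x_t),x-x_\prj\rangle\ge\nu t^2\|x-x_\prj\|^2$, i.e. $\langle\nabla f(x_t),x-x_\prj\rangle\ge\nu t\|x-x_\prj\|^2$ for $t\in(0,1]$ (and trivially at $t=0$). Substituting into the integral identity above,
\[
f(x)-f(x_\prj)\ge\nu\|x-x_\prj\|^2\int_0^1 t\,dt=\frac{\nu}{2}\|x-x_\prj\|^2,
\]
which is the claim. The main obstacle is really just the segment-projection fact; I would also remark that RSI applied only at $x$ itself, combined with convexity, is not enough, since convexity bounds the slope $t\mapsto\langle\nabla f(x_t),x-x_\prj\rangle$ only from below by its value $0$ at $t=0$, so the integral could otherwise be arbitrarily small -- it is precisely the availability of RSI at all intermediate points that forces the linear-in-$t$ lower bound.
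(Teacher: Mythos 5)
Your proof is correct and follows essentially the same route as the paper's: integrate $\langle\nabla f(x_t),\,x-x_\prj\rangle$ along the segment from $x_\prj$ to $x$, observe that every intermediate point still projects onto $\cX^*$ at $x_\prj$, apply RSI$(\nu)$ there to get the linear-in-$t$ lower bound, and integrate. The only difference is that you justify the segment-projection fact via the variational inequality for projection onto the convex set $\cX^*$, while the paper simply asserts it; note the fact also follows from the triangle inequality alone, so convexity of $\cX^*$ is not actually needed.
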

\begin{proof}
Since for any $\tau\in[0,1]$ point $y_\tau={x_\prj}+\tau(x-{x_\prj})\in\lfloor x,x_\prj \rfloor$ projects to $\mathcal{X}^*$ at  ${x_\prj}$,
we have
\begin{subequations}
\begin{align}
f(x) &=f({x_\prj})+\int_0^1 \langle\nabla f({x_\prj}+\tau(x-{x_\prj})), x-{x_\prj}\rangle d\tau\\
\label{eq1}
&=f({x_\prj})+\int_0^1 \frac{1}{\tau}\langle\nabla f({x_\prj}+\tau(x-{x_\prj}))-\nabla f({x_\prj}), \tau(x-{x_\prj})\rangle d\tau\\
\label{ineq02}
 &\geq f({x_\prj})+\int_0^1 \frac{1}{\tau}\nu \tau^2\|x-{x_\prj}\|^2d\tau\\
  &=f({x_\prj})+ \frac{\nu}{2}\|x-{x_\prj}\|^2
\end{align}
\end{subequations}
where (\ref{eq1}) follows from $\nabla f({x_\prj})=0$ and (\ref{ineq02})  from RSI$(\nu)$. \end{proof}

It is worth noting that since $x_\prj$ is restricted,  inequality (\ref{RSC1}) does not mean that $f$ grows \emph{everywhere} quicker than the quadratic function $q(x) = \frac{\nu}{2}\|x-{x_\prj}\|^2$.

\subsection{Examples of RSI and RSC functions}
% There is little need to give examples of functions with RLGs before its restriction set becomes clear in our convergence results. Of course, any functions with Lipschitz continuous gradients have RLGs but depending on the restriction set, $R$ can be nearly the same or much smaller than $L$.
%
% We first give some examples of functions satisfying the RSI.
%
\begin{figure}[ht]
\centering
\subfigure[]{
    \includegraphics[width=0.45\textwidth]{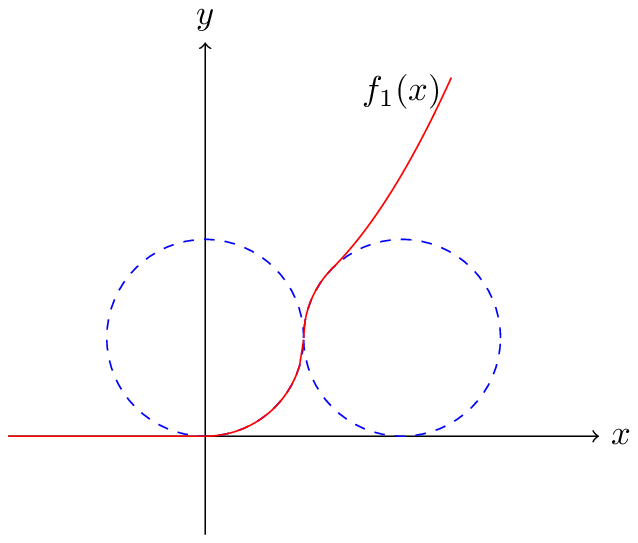}
    \label{fig:g1}
}
%\hspace{20pt}
\subfigure[]{
   \includegraphics[width=0.45\textwidth]{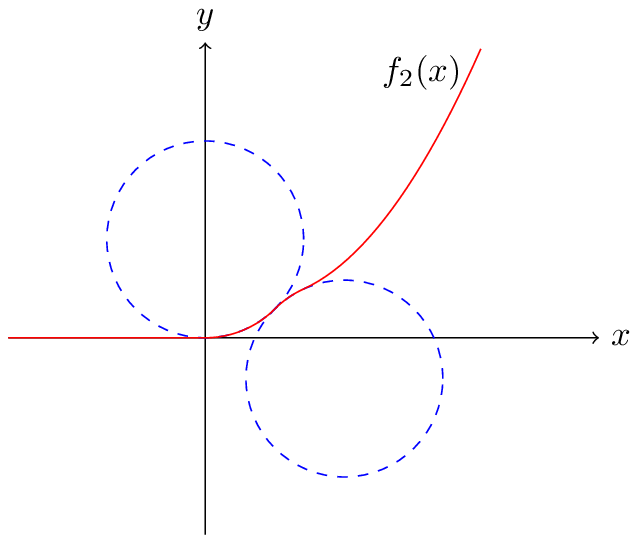}
    \label{fig:g2}
}
\caption[Optional caption for list of figures]{Non-convex  functions satisfying  RSI}
\label{fig01}
\end{figure}
Examples 1 and 2 below are non-convex and probably of no practical use. However, they illustrate that RSI inequality \eqref{eq:rsi} imposes a ``minimum average curvature'' of $f$ between $x$ and $x_\prj$, and unlike \eqref{SC}, it alone does \emph{not} guarantee convexity. Hence, the RSC definition must explicitly include convexity.
\begin{example}[Figure \ref{fig:g1}, RSI and non-convex]
\begin{equation}
f_1(x)=\left\{\begin{array}{ll}
0,& x\leq 0,\\
1-\sqrt{1-x^2},     & 0\leq x\leq 1,\\
1+\sqrt{1-(x-2)^2}, & 1\leq x\leq 2-\frac{\sqrt{2}}{2},\\
\frac{1}{2}(x-1+\frac{\sqrt{2}}{2})^2+\frac{1+\sqrt{2}}{2},& x\geq 2-\frac{\sqrt{2}}{2}.
\end{array}\right.
\end{equation}
 $f_1$ is non-convex, and its minimizer set is $(-\infty, 0]$. Since $f'_1(x)\to +\infty$ as $x\to 1$, $f'_1$ is not Lipschitz continuous. $f_1$ satisfies RSI($\nu$) with $\nu = \frac{2}{4-\sqrt{2}}=\min_{x\ge 0} f_1'(x)/x$.
\end{example}

\begin{example}[Figure \ref{fig:g2}, RSI and non-convex]
\begin{equation}
f_2(x)=\left\{\begin{array}{ll}
0,& x\leq 0,\\
1-\sqrt{1-x^2}, & 0\leq x\leq \frac{\sqrt{2}}{2},\\
\sqrt{1-(x-\sqrt{2})^2}-\sqrt{2}+1, & \frac{\sqrt{2}}{2}\leq x\leq 1,\\
\frac{1}{2}(x-1+\sqrt{\frac{\sqrt{2}-1}{2}})^2+ \sqrt{2\sqrt{2}-2}+\frac{5-5\sqrt{2}}{4}, & x\geq 1.
\end{array}\right.
\end{equation}
$f_2$ is non-convex, and its minimizer set is $(-\infty, 0]$. Unlike $f_1$,
$\max_{x\geq 0}\frac{\nabla f_2(x)}{x}$
is finite and thus $f_2$ has  a Lipschitz continuous gradient. $f_2$ satisfies RSI($\nu$) with $\nu = \sqrt{\frac{\sqrt{2}-1}{2}}=\min_{x\ge 0} f_2'(x)/x$.
\end{example}
Examples 3 and 4 below explain that RSC and strict convexity do not contain each other, and strong convexity is strictly included in their intersection. Recall that a function $f$ is strictly convex if $f(\alpha x+(1-\alpha)y)< \alpha f(x) + (1-\alpha) f(y)$  for any $x\not=y$ and $\alpha\in (0,1)$.
\begin{figure}[ht]
\begin{center}\includegraphics[width=0.4\textwidth]{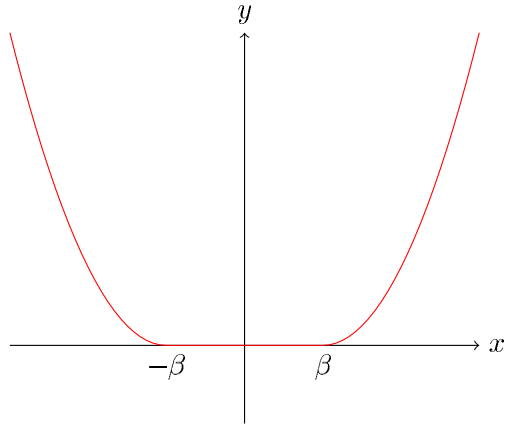}\end{center}
\caption{RSC but not strictly convex}\label{fig:shrink}
\end{figure}

%\begin{figure}[ht]
%\centering
%\subfigure[$\shrink_\beta(x)$]{
%    \includegraphics[scale =0.9] {shrink.pdf}
%    \label{fig:subfig01}
%}\hspace{10pt}
%\figure[$(1/2)\|\shrink_\beta(x)\|^2$]{
%   \includegraphics[scale =0.7] {s_shrink.pdf}
%    \label{fig:subfig02}
%}
%\caption[Optional caption for list of figures]{RSC but not strictly convex.}
%\end{figure}

\begin{example}[Figure \ref{fig:shrink}, RSC but not strictly convex] Let $x\in\RR, \beta >0$ and define
\begin{align}
\label{shrk}
\shrink_\beta(x) & = \sign(x)\max\{|x|-\beta,0\},\\
f_3(x)& =\frac{1}{2}\|\shrink_\beta(x)\|^2_.\nonumber
\end{align}
$f$ is not strictly convex since $f_3(x)= 0$ for $x\in\cX^*=[-\beta, \beta]$, which is its minimizer set. On the other hand, $f_3(x) = (1/2)\|x-\beta\|^2$ for $x\ge \beta$ and $f_3(x)=(1/2)\|x+\beta\|^2$ for $x\le \beta$, so $f_3$ is RSC($\nu$) with $\nu = 1$.
\end{example}

\begin{example}[Strictly convex, but not RSC]  Functions $f(x)=x^4$ and $f(x)=e^x$ are  strictly convex but \emph{not} RSC. In particular, $f(x)=e^x$ does not have a minimizer though it is lower bounded by 0.
\end{example}
\begin{figure}[ht]
\begin{center}\includegraphics[width=0.45\textwidth]{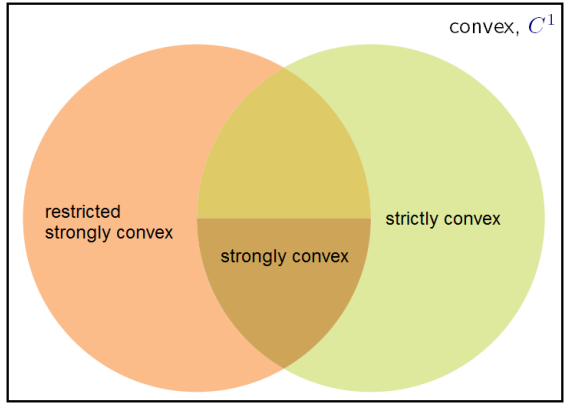}\end{center}
\caption{Classes of convex differentiable functions}\label{fig:comp}
\end{figure}
Motivated by the above examples, we can divide convex differentiable functions into subclasses of RSC, strictly convex, and strongly convex functions depicted in Figure \ref{fig:comp}. Strictly and strongly convex functions do not need to be differentiable. Although our definition of RSC can be generalized for non-differentiable functions through their subdifferentials, we keep it simple as is.
\begin{example}[Dual objective of augmented $\ell_1$ model]\label{eg:augl1} Let $A\in\RR^{m\times n}$. The Lagrange dual problem to
\beq\label{augl1}\min \left\{\|x\|_1 + \frac{1}{2\alpha}\|x\|^2:Ax = b\right\}
\eeq
is
\beq\label{augl1d}\max_y f(y) = b^T y - \frac{\alpha}{2}\|\shrink_1(A^Ty)\|^2, \eeq
where $\shrink_1(z)$ is given in \eqref{shrk}.
Provided that $Ax=b$ is consistent,  \emph{\cite{LY}} shows that $-f$ is  RSC($\nu$) with $\nu >0$. (See Lemma 7 of  \emph{\cite{LY}} for an explicit lower bound of $\nu$).
\end{example}
Admittedly, establishing RSC and deriving a bound for $\nu$ are not straightforward as they typically involve projection to the minimizer set $\cX^*$, which may not be easy to analytically derive. On the other hand, we have to live with RSC as we will show later that it is both sufficient and necessary. Next we present some results of deriving RSC for certain composite functions.
\begin{theorem}[Linear composition 1]\label{thm:gax} Let $g\in\hat{\cR}_{L,\nu}(\RR^m)$. If $g$ has a unique minimizer $y^*$ and matrix $A\in\RR^{m\times n}$ $(m\leq n)$ has full row-rank (i.e., $A$ is surjective), then function $f(x)=g(Ax)$ is RSC. Specifically,
\beq\label{gax}
f(x)\in\hat{\cR}_{\bar L, \bar{v}}(\RR^m),
\eeq
where
$\bar{L}=L\|A\|^2$ and $\bar{\nu}=\nu\lambda_{\min}(AA^T)$.
\end{theorem}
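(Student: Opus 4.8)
The plan is to verify the two defining properties of $\hat{\cR}_{\bar L,\bar\nu}(\RR^m)$ for $f(x)=g(Ax)$ directly from the chain rule $\nabla f(x)=A^T\nabla g(Ax)$, namely (i) convexity and global $\bar L$-Lipschitz continuity of $\nabla f$ with $\bar L=L\|A\|^2$, and (ii) the restricted secant inequality RSI($\bar\nu$) with $\bar\nu=\nu\lambda_{\min}(AA^T)$. Part (i) is routine: convexity of $f$ is immediate since $g$ is convex and $A$ is linear, and for any $x,y$,
\[
\|\nabla f(x)-\nabla f(y)\| = \|A^T(\nabla g(Ax)-\nabla g(Ay))\| \le \|A\|\,\|\nabla g(Ax)-\nabla g(Ay)\| \le L\|A\|\,\|Ax-Ay\| \le L\|A\|^2\|x-y\|,
\]
using $g\in\cF_L(\RR^m)$. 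So the work is entirely in part (ii).

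For (ii), the key observation is that the solution set of $f$ and the (unique) minimizer of $g$ are linked through $A$: since $g$ has the unique minimizer $y^*$ and $A$ is surjective, $\cX^*=\{x:Ax=y^*\}$, which is a nonempty affine subspace (a translate of $\Null(A)$). The crucial geometric fact I would establish first is that for any $x\in\RR^n$, if $x_\prj=\Proj_{\cX^*}(x)$ then $Ax_\prj = y^*$ and, moreover, $x-x_\prj \perp \Null(A)$, i.e.\ $x-x_\prj$ lies in the row space of $A$, so it can be written $x-x_\prj = A^T w$ for some $w$. This is exactly what lets me transfer the RSI constant from the $y$-space down to the $x$-space. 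Then
\[
\langle\nabla f(x)-\nabla f(x_\prj),\,x-x_\prj\rangle = \langle A^T(\nabla g(Ax)-\nabla g(y^*)),\,x-x_\prj\rangle = \langle \nabla g(Ax)-\nabla g(y^*),\,A(x-x_\prj)\rangle.
\]
Because $g$ is strongly convex–like along the relevant direction — more precisely, $g\in\hat{\cR}_{L,\nu}$ and $Ax_\prj=y^*$ is the projection of $Ax$ onto the singleton $\{y^*\}$ — applying RSI($\nu$) for $g$ gives $\langle\nabla g(Ax)-\nabla g(y^*),\,Ax-y^*\rangle \ge \nu\|Ax-y^*\|^2 = \nu\|A(x-x_\prj)\|^2$. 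Finally I bound $\|A(x-x_\prj)\|^2$ from below by $\lambda_{\min}(AA^T)\|x-x_\prj\|^2$: writing $x-x_\prj=A^Tw$, we get $\|A(x-x_\prj)\|^2 = \|AA^Tw\|^2 \ge \lambda_{\min}(AA^T)\,\langle AA^Tw, w\rangle = \lambda_{\min}(AA^T)\|A^Tw\|^2 = \lambda_{\min}(AA^T)\|x-x_\prj\|^2$, where the middle inequality uses that $AA^T\succ 0$ (by full row rank) so its smallest eigenvalue is positive. Chaining these gives RSI($\bar\nu$).

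The main obstacle is the geometric lemma that $x-x_\prj$ is orthogonal to $\Null(A)$ and hence lies in $\mathrm{range}(A^T)$; everything else is a short computation. This is where surjectivity of $A$ and the uniqueness of $y^*$ are genuinely used: uniqueness makes $\cX^*$ an honest affine subspace $x_0+\Null(A)$, and then the characterization of Euclidean projection onto an affine subspace — the residual is orthogonal to the subspace's direction — delivers $x-x_\prj\in\Null(A)^\perp=\mathrm{range}(A^T)$. One should double-check the edge case $x\in\cX^*$ (both sides zero) and note that $g$ having a finite unique minimizer guarantees $\cX^*\neq\emptyset$, so $f$ is indeed RSC per Definition \ref{rsc} once convexity and RSI are in hand.
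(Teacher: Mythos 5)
Your proposal is correct and follows essentially the same route as the paper: chain rule for $\nabla f$, identification of $\cX^*=\{x:Ax=y^*\}$, the key fact $x-x_\prj\in\mathrm{Range}(A^T)$, then RSI of $g$ (valid since $y^*$ is the projection of $Ax$ onto the singleton solution set of $g$) combined with $\|A(x-x_\prj)\|^2\ge\lambda_{\min}(AA^T)\|x-x_\prj\|^2$. The only cosmetic difference is that the paper obtains $x-x_\prj\in\mathrm{Range}(A^T)$ from the explicit projection formula $x_\prj=x+A^T(AA^T)^{-1}(y^*-Ax)$, whereas you invoke the orthogonality characterization of projection onto the affine set $x_0+\Null(A)$; these are equivalent.
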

Applying this theorem, any strongly convex function $g$ with Lipschitz continuous gradient satisfies the condition of Theorem \ref{thm:gax} and thus $f(x) = g(Ax)$ is RSC if $A$ has full row-rank though $f$ is generally not strongly convex. ($f$ will be strongly convex if $A$ has full column-rank, following a standard argument). %While a full row-rank matrix $A$  can have a non-trivial null space and $f$ may not be strongly convex, $f$ is  guaranteed to be RSC.
$f(x)=g(Ax)$ arises in various applications including examples in convex quadratic minimization,  statistical regression, routing problems in data networks, and many others. % \cite{BG}, and many others.

\begin{proof}[Proof of Theorem \ref{thm:gax}] For any $x,y\in\RR^n$, we have
$$\|\nabla f(x) - \nabla f(y)\| = \| A^T \nabla g(Ax) -  A^T \nabla g(Ay)\| \le L\|A\| \|A(x - y)\| \le (L \|A\|^2) \|x - y\|,$$
which means $f\in \cF_{\bar{L}}$.
By definition, the minimizer set of $f$ is
$$\cX^*=\{x\in\RR^n:Ax = y^*\},$$
which is nonempty since $A$ has full row-rank. The projection of any $x\in\RR^n$ to $\cX^*$ is $$x_\prj =x+A^T(AA^T)^{-1} (y^*-Ax) .$$
Since $\nabla f(x) = A^T\nabla g(Ax)$, we
$$ \langle \nabla f(x),x-x_\prj\rangle = \langle  \nabla g(Ax) -  \nabla g(Ax_\prj), Ax - A x_\prj\rangle  \ge \nu \|A(x - x_\prj)\|^2 \ge (\nu \lambda_{\min}(AA^T)) \|x - x_\prj\|^2.$$
where the first inequality follows from $g\in\cR_{L,\nu}$ and the second one from $x-x_\prj\in\mathrm{Range}(A^T)$.
\end{proof}

Next, we show that if function $g$ is strictly convex, then we no longer need $A$ to have full row-rank. We first present two lemmas:
\begin{lemma}[\cite{t1}]\label{lemadd1}
Let $f(x)=g(Ax)$ and assume that $g$ is strictly convex and the minimizer set of $f$, denoted by $\cX^*$, is nonempty. Then, there exists a vector $t^*\in\RR^m$ such that  $\cX^*=\{x\in\RR^n: Ax = t^*\}$.
\end{lemma}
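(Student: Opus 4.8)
The plan is to show that $g$ restricted to the affine image $\{Ax : x \in \RR^n\} = \mathrm{Range}(A)$ (a linear subspace of $\RR^m$) attains its minimum at a unique point, and that this unique point is the common value of $Ax$ over all minimizers $x \in \cX^*$. Since $\cX^*$ is assumed nonempty, pick any $x_0 \in \cX^*$ and set $t^* := Ax_0$. The containment $\cX^* \subseteq \{x : Ax = t^*\}$ is the heart of the matter; the reverse containment $\{x : Ax = t^*\} \subseteq \cX^*$ is immediate, since $f(x) = g(Ax) = g(t^*) = g(Ax_0) = f(x_0) = f^*$ for every such $x$.

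For the forward containment, suppose $x_1 \in \cX^*$ as well, so $f(x_1) = f(x_0) = f^*$, i.e. $g(Ax_1) = g(Ax_0) = f^*$. I would argue that $Ax_1 = Ax_0$ by exploiting strict convexity of $g$ together with the fact that the segment between $Ax_0$ and $Ax_1$ stays inside $\mathrm{Range}(A)$, hence inside the (convex) feasible region over which $f^*$ is the minimum of $g$. Concretely, the midpoint $\bar t = \tfrac12(Ax_0 + Ax_1)$ equals $A\bigl(\tfrac12(x_0+x_1)\bigr)$, so $g(\bar t) = f\bigl(\tfrac12(x_0+x_1)\bigr) \ge f^*$. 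On the other hand, if $Ax_0 \neq Ax_1$, strict convexity of $g$ gives $g(\bar t) < \tfrac12 g(Ax_0) + \tfrac12 g(Ax_1) = f^*$, a contradiction. Hence $Ax_0 = Ax_1$, which gives $\cX^* \subseteq \{x : Ax = t^*\}$ and completes the proof.

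The only subtlety is making sure strict convexity is being invoked at a genuine pair of distinct points in the domain of $g$ — this is exactly the hypothesis $Ax_0 \neq Ax_1$ we are assuming for contradiction, so no extra care is needed there. I expect the main (very mild) obstacle to be purely expository: being careful that the minimum value $f^*$ of $f$ over $\RR^n$ coincides with the minimum of $g$ over the subspace $\mathrm{Range}(A)$, so that the inequality $g(\bar t)\ge f^*$ is legitimate. This is built into the definitions — $f = g\circ A$ and $\mathrm{Range}(A)$ is precisely the set of attainable arguments of $g$ — so the argument goes through cleanly. Note the result does not need $g$ to be differentiable or to have Lipschitz gradient; strict convexity and nonemptiness of $\cX^*$ suffice.
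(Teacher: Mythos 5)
Your argument is correct: the convexity of the midpoint trick (strict convexity of $g$ at $Ax_0\neq Ax_1$ forces $g\bigl(\tfrac12(Ax_0+Ax_1)\bigr)<f^*$, contradicting that $\tfrac12(x_0+x_1)$ is a feasible point of $f$) is exactly the standard proof of this fact, and the reverse inclusion is handled properly. The paper itself gives no proof, deferring to the citation of Tseng, so your self-contained argument is the expected one and fills that gap cleanly.
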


\begin{lemma}[\cite{LY}]\label{lemadd2}
Let $\lambda^{++}_{\min}$ denote the minimum \emph{strictly positive} eigenvalue of a nonzero symmetric matrix
$S$, assuming its existence. Namely, given $\{\lambda_i(S)\}$, the set of eigenvalues of $S$,
$$\lambda^{++}_{\min}(S)= \min\{\lambda_i(S) : \lambda_i(S) > 0\}.$$
Then, for every nonzero matrix $A$, we have
$$\lambda^{++}_{\min}(AA^T)= \min_{\|A\alpha\|_2=1} (A\alpha)^T(AA^T)(A\alpha).$$
\end{lemma}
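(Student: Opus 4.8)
The plan is to turn the constrained minimum on the right-hand side into a Rayleigh-quotient minimization over $\mathrm{Range}(S)$, where $S=AA^T$, and then read off the value from the spectral theorem. First I would record two elementary facts. Since $A\neq 0$, we have $\mathrm{Range}(A)\neq\{0\}$, so $\{\alpha\in\RR^n:\|A\alpha\|_2=1\}$ is nonempty; and since $S=AA^T\neq 0$ is symmetric positive semidefinite, $\lambda^{++}_{\min}(S)$ exists. The key structural fact I would establish is
$$\{A\alpha:\alpha\in\RR^n\}=\mathrm{Range}(A)=\mathrm{Range}(S).$$
The inclusion $\mathrm{Range}(S)\subseteq\mathrm{Range}(A)$ is immediate; for the converse, $Sx=0$ forces $\|A^Tx\|_2^2=x^TSx=0$, hence $\mathrm{Null}(S)=\mathrm{Null}(A^T)$, and taking orthogonal complements (legitimate since $S$ is symmetric) gives $\mathrm{Range}(S)=\mathrm{Null}(S)^\perp=\mathrm{Null}(A^T)^\perp=\mathrm{Range}(A)$.

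With this in hand, the substitution $v=A\alpha$ shows that the right-hand side equals the Rayleigh quotient of $S$ minimized over the unit sphere of $\mathrm{Range}(S)$:
$$\min_{\|A\alpha\|_2=1}(A\alpha)^TS(A\alpha)=\min_{v\in\mathrm{Range}(S),\ \|v\|_2=1}v^TSv.$$
In particular this minimum is attained, being that of a continuous function on the compact unit sphere of a subspace.

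Finally I would invoke the spectral decomposition $S=\sum_i\lambda_iu_iu_i^T$ with $\{u_i\}$ an orthonormal eigenbasis and $\lambda_i\ge 0$. Since $\mathrm{Range}(S)=\mathrm{span}\{u_i:\lambda_i>0\}$, every unit $v\in\mathrm{Range}(S)$ is $v=\sum_{i:\lambda_i>0}c_iu_i$ with $\sum_ic_i^2=1$, so $v^TSv=\sum_{i:\lambda_i>0}\lambda_ic_i^2\ge\lambda^{++}_{\min}(S)$, with equality for $v=u_{i^*}$ where $\lambda_{i^*}=\lambda^{++}_{\min}(S)$. Combining with the previous display gives the claim. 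The argument is routine; the only step that needs genuine care is the identity $\mathrm{Range}(A)=\mathrm{Range}(AA^T)$, which is exactly what guarantees that $v=A\alpha$ sweeps out precisely the span of the positive-eigenvalue eigenvectors — neither a proper subspace of it nor a larger space containing $\mathrm{Null}(S)$ — so that the restricted Rayleigh quotient sees exactly the strictly positive eigenvalues.
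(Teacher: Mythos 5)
Your proof is correct and complete: the identity $\mathrm{Range}(AA^T)=\mathrm{Range}(A)$ (via $\mathrm{Null}(AA^T)=\mathrm{Null}(A^T)$ and orthogonal complements) is exactly the point that lets the substitution $v=A\alpha$ turn the constrained minimum into a Rayleigh quotient over $\mathrm{Range}(S)$, and the spectral decomposition then yields $\lambda^{++}_{\min}(S)$ with the minimum attained at the eigenvector of the smallest positive eigenvalue. Note that the paper itself offers no proof of this lemma---it is imported from the cited reference [LY]---so there is nothing to compare against beyond observing that your argument is the standard one used there; the only cosmetic remark is that you state $\mathrm{Null}(S)=\mathrm{Null}(A^T)$ after proving just one inclusion, the reverse inclusion being trivial and worth a clause.
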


Furthermore, we need the sets $\mathcal{B}_1(t^*, \gamma)=\{y\in\RR^m: \|y-t^*\|_2\leq \gamma\}$
and  $\mathcal{B}_2(t^*, \gamma, A)=\{x\in\RR^n: Ax\in\mathcal{B}_1(t^*, \gamma) \}$. Now, let us state the result
\begin{theorem}[Linear composition 2]\label{lncp2} Assume that $g$ is strongly convex with  modulus $\mu$ on $\mathcal{B}_1(t^*, \gamma)$ for some $\gamma>0$ and $f(x)=g(Ax)$ has a minimizer. Then, $f$ satisfies the RSI with $\mu \lambda^{++}_{\min}(A^TA)$ for all $x\in \mathcal{B}_2(t^*, \gamma, A)$. 

In addition, if $\nabla g$ is Lipschitz continuous with constant $L$, then \beq\label{gbx}
f(x)\in\hat{\cR}_{\bar L, \bar{v}}(\RR^m),
\eeq
where
$\bar{L}=L\|A\|^2$ and $\bar{\nu}=\mu \lambda^{++}_{\min}(A^TA)$.
\end{theorem}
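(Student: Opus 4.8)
The plan is to pin down the geometry of $\cX^*$ first and then reduce the restricted secant inequality to one short chain of estimates. Since $g$ is strictly convex (the standing assumption of this part) and, in addition, strongly convex with modulus $\mu$ on $\mathcal{B}_1(t^*,\gamma)$, Lemma~\ref{lemadd1} gives $\cX^*=\{x\in\RR^n:Ax=t^*\}$, where $t^*$ is the unique minimizer of $g$ over the subspace $\mathrm{Range}(A)$. First-order optimality of $t^*$ over $\mathrm{Range}(A)$ then forces $\nabla g(t^*)\perp\mathrm{Range}(A)$, i.e.\ $A^T\nabla g(t^*)=0$; consistently, $\nabla f(x_\prj)=A^T\nabla g(Ax_\prj)=A^T\nabla g(t^*)=0$. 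Since $\cX^*$ is affine with direction space $\Null(A)$, the projection of any $x$ satisfies $Ax_\prj=t^*$ and $x-x_\prj\in\Null(A)^{\perp}=\mathrm{Range}(A^T)$, and these two observations are what drive the argument.

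Next I would prove the RSI bound. Fix $x\in\mathcal{B}_2(t^*,\gamma,A)$, so that $Ax\in\mathcal{B}_1(t^*,\gamma)$. Using $\nabla f(\cdot)=A^T\nabla g(A\cdot)$, $\nabla f(x_\prj)=0$, and the identity $\langle A^T\nabla g(t^*),\,x-x_\prj\rangle=0$, write
\[
\langle \nabla f(x)-\nabla f(x_\prj),\,x-x_\prj\rangle
= \langle \nabla g(Ax)-\nabla g(t^*),\,Ax-t^*\rangle .
\]
Because $Ax$ and $t^*$ both lie in the convex set $\mathcal{B}_1(t^*,\gamma)$ on which $g$ is strongly convex with modulus $\mu$, the right-hand side is at least $\mu\|Ax-t^*\|^2=\mu\|A(x-x_\prj)\|^2$. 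Finally, since $x-x_\prj\perp\Null(A)$, an SVD argument — equivalently Lemma~\ref{lemadd2} together with the fact that the nonzero spectra of $A^TA$ and $AA^T$ coincide — gives $\|A(x-x_\prj)\|^2\ge \lambda^{++}_{\min}(A^TA)\,\|x-x_\prj\|^2$. Chaining these three steps yields the restricted secant inequality with constant $\mu\lambda^{++}_{\min}(A^TA)$ for every $x\in\mathcal{B}_2(t^*,\gamma,A)$.

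For the ``in addition'' part, the Lipschitz estimate is routine: $\|\nabla f(x)-\nabla f(y)\|=\|A^T(\nabla g(Ax)-\nabla g(Ay))\|\le \|A\|\,L\,\|A(x-y)\|\le L\|A\|^2\,\|x-y\|$, so $f\in\cF_{\bar L}$ with $\bar L=L\|A\|^2$. Together with the secant bound and the convexity of $f$ (inherited from that of $g$), this places $f$ in $\hat{\cR}_{\bar L,\bar\nu}$ with $\bar\nu=\mu\lambda^{++}_{\min}(A^TA)$, with the understanding (exactly as in the first assertion) that the secant inequality is claimed over $\mathcal{B}_2(t^*,\gamma,A)$; if $g$ is strongly convex on all of $\RR^m$, i.e.\ $\gamma=\infty$, the conclusion is global.

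I expect the main obstacle to be the bookkeeping of the first paragraph rather than any delicate inequality: one has to identify $t^*$ correctly as the minimizer of $g$ restricted to $\mathrm{Range}(A)$ (so that $A^T\nabla g(t^*)=0$ and $Ax_\prj=t^*$), and to keep the whole argument honestly confined to $\mathcal{B}_2(t^*,\gamma,A)$, since only strong convexity of $g$ on a ball (rather than everywhere) is assumed. Once the identity $\langle \nabla f(x)-\nabla f(x_\prj),\,x-x_\prj\rangle=\langle \nabla g(Ax)-\nabla g(t^*),\,Ax-t^*\rangle$ and the inclusion $x-x_\prj\in\mathrm{Range}(A^T)$ are established, the strong-convexity step and the eigenvalue step are both immediate.
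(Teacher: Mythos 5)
Your proof is correct and takes essentially the same route as the paper's: identify $\cX^*=\{x:Ax=t^*\}$ via Lemma~\ref{lemadd1}, observe $x-x_\prj\in\mathrm{Range}(A^T)$, apply the strong convexity of $g$ on $\mathcal{B}_1(t^*,\gamma)$ to $\langle \nabla g(Ax)-\nabla g(t^*),Ax-t^*\rangle$, and finish with the smallest-positive-eigenvalue bound (Lemma~\ref{lemadd2}), with the Lipschitz part handled exactly as in Theorem~\ref{thm:gax}. Your two deviations — getting $x-x_\prj\perp\Null(A)$ from the affine structure of $\cX^*$ instead of the paper's Lagrange-multiplier argument, and explicitly noting that \eqref{gbx} holds with the secant inequality confined to $\mathcal{B}_2(t^*,\gamma,A)$ unless $\gamma=\infty$ — are only cosmetic refinements of the same argument.
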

\begin{proof}  For $x\in \mathcal{B}_2(t^*, \gamma, A)$, let $x_\prj$ be its projection onto $\cX^*$. Since $\cX^*$ is nonempty, there exists $x^*\in\cX^*$ such that $Ax^*=t^*$ by Lemma \ref{lemadd1}. By the definition of projection, we have
$$x_\prj=\arg\min\frac{1}{2}\|x-z\|_2^2, ~\textrm{subject to}~~Az=Ax^*$$
Hence, there exists a Lagrange multiplier $\lambda$ such that $x-x_\prj=A^T\lambda\in\mathrm{Range}(A^T)$.

Since $\nabla f(x) = A^T\nabla g(Ax)$, we have
$$ \langle \nabla f(x)-\nabla f(x_\prj),x-x_\prj\rangle = \langle  \nabla g(Ax) -  \nabla g(Ax_\prj), Ax - A x_\prj\rangle  \ge \mu \|A(x - x_\prj)\|^2 \ge (\mu \lambda^{++}_{\min}(A^TA)) \|x - x_\prj\|^2.$$
where the first inequality follows from that $g$ is strongly convex with  modulus $\mu$ on $\mathcal{B}_1(t^*, \gamma)$ and $x\in \mathcal{B}_2(t^*, \gamma, A)$, and the second one follows from Lemma \ref{lemadd2} and the fact that $x-x_\prj\in\mathrm{Range}(A^T)$. Then \eqref{gbx} follows trivially.
\end{proof}

Applying the Cauch-Schwartz inequality to $ \langle \nabla f(x)-\nabla f(x_\prj),x-x_\prj\rangle $, it is easy to see that Theorem \ref{lncp2} immediately implies that $\|x - x_\prj\|\leq (\mu \lambda^{++}_{\min}(A^TA)) ^{-1}\|\nabla f(x)\|$, which is referred to as the error bound condition and is a key to the analysis in \cite{s}.

\subsection{Convex conjugacy}
The conjugate of convex function $f$ is
\beq\label{congj}
f^*(y) := \sup_x\{\langle y, x\rangle -f(x)\}.
\eeq
A duality relation can be obtained between RLG and RSC, in analogy to the well-known result that a convex function $f$ is differentiable and $\nabla f$ is Lipschitz-continuous with constant $L$ if and only if $f^*$ is strongly convex with constant $1/L$. In this subsection, we consider non-differentiable functions to present our result (while we restrict ourselves to differentiable functions in other sections).
\begin{definition}\label{rlsub}
Let $f$ be a convex function. We say that $f$ has restricted Lipschitz \emph{subgradients} if there exists $L>0$ such that for any $x\not=0$,$$L\langle p - q,x\rangle \ge \| p-q\|^2,\quad \forall p\in \partial f(x), ~q=\Proj_{\partial f(0)}(p).$$
\end{definition}
Definition \ref{rlsub} applies to non-differentiable functions while the usual Lipschitz continuity of gradient of course requires differentiability. In Example \ref{eg:augl1}, the primal objective \eqref{augl1} is non-differentiable but satisfies Definition \ref{rlsub} with $L = \alpha^{-1}$.

\begin{theorem}
Let $f$ be a strictly convex function and $0\in \dom f$. $f$ has  restricted Lipschitz subgradients with constant $L>0$ if and only if $f^*$ is RSC with constant $L^{-1}>0$.
\end{theorem}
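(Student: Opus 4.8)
The plan is to establish the two directions of the equivalence separately, using the standard conjugate correspondence $p \in \partial f(x) \iff x \in \partial f^*(p)$, together with the normalization $0 \in \dom f$ which forces $\partial f(0)$ to be nonempty (and, since $f$ is strictly convex, makes $f^*$ differentiable, so that $\partial f^*(p) = \{\nabla f^*(p)\}$ is a singleton). The key translation is this: the condition in Definition \ref{rlsub}, namely $L\langle p-q, x\rangle \ge \|p-q\|^2$ for every $x \neq 0$, every $p \in \partial f(x)$, and $q = \Proj_{\partial f(0)}(p)$, should turn — under the inversion $p \leftrightarrow x = \nabla f^*(p)$ — into the RSI inequality for $f^*$, $\langle \nabla f^*(p) - \nabla f^*(p_\prj), p - p_\prj\rangle \ge L^{-1}\|p - p_\prj\|^2$, where $p_\prj = \Proj_{(\partial f^*)^{-1}(0)}(p)$ is the projection of $p$ onto the minimizer set of $f^*$.

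The first step is to identify the minimizer set of $f^*$. By Fermat's rule, $p$ minimizes $f^*$ iff $0 \in \partial f^*(p)$ iff $p \in \partial f(0)$; hence $\cX^*_{f^*} = \partial f(0)$. This is exactly the set onto which $q$ is projected in Definition \ref{rlsub}, so the $q$ appearing there is precisely the $p_\prj$ appearing in the RSI for $f^*$: no ambiguity, the two notions of projection coincide. Next, for the forward direction, take any $p$ not minimizing $f^*$ and set $x = \nabla f^*(p)$, so that $p \in \partial f(x)$; I must check $x \neq 0$, which holds because $x = 0$ would give $p \in \partial f(0) = \cX^*_{f^*}$, contradicting the choice of $p$. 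Also $q = p_\prj = \nabla f^*(p_\prj)$-image-style: more carefully, $p_\prj \in \partial f(0)$ means $0 \in \partial f^*(p_\prj)$, i.e. $\nabla f^*(p_\prj) = 0$. Then $\langle \nabla f^*(p) - \nabla f^*(p_\prj), p - p_\prj\rangle = \langle x - 0, p - q\rangle = \langle p - q, x\rangle \ge L^{-1}\|p-q\|^2$ by the hypothesis. Finally one must confirm that $f^*$ has a finite minimizer and is convex, so that it qualifies as RSC($L^{-1}$) in the sense of Definition \ref{rsc}; finiteness of the minimum of $f^*$ follows since $\partial f(0) \neq \emptyset$, and convexity of $f^*$ is automatic. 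The converse direction runs the same computation in reverse: given the RSI for $f^*$, take any $x \neq 0$ and $p \in \partial f(x)$, set $q = \Proj_{\partial f(0)}(p) = p_\prj$, note $x = \nabla f^*(p)$ and $0 = \nabla f^*(p_\prj)$, and read off $L\langle p - q, x\rangle \ge \|p-q\|^2$.

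The main obstacle is not any single inequality but the careful bookkeeping of the correspondence between subgradients of $f$ and gradients of $f^*$ in the nonsmooth, multivalued setting — specifically, verifying that $f$ strictly convex plus $0 \in \dom f$ really does make $f^*$ differentiable everywhere on (the relevant part of) its domain, so that "$\nabla f^*(p)$" is well-defined and single-valued wherever it is used, and that the projection operators in the two definitions land in the same set $\partial f(0)$. One should also be slightly careful that the RSI inequality for $f^*$ is required to hold for all $p \in \dom f^*$ (equivalently all $p$ in the range of $\partial f$), and match this against the "for any $x \neq 0$, for all $p \in \partial f(x)$" quantifier in Definition \ref{rlsub}; the case $x = 0$ on the $f$ side corresponds exactly to $p$ already in the minimizer set of $f^*$, where RSI is trivial, so the two quantifier ranges match up. Once these domain-and-differentiability points are pinned down, the equivalence is a one-line substitution in each direction.
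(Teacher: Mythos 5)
Your proof is correct and follows essentially the same route as the paper's: identify the minimizer set of $f^*$ with $\partial f(0)$, use strict convexity of $f$ to get differentiability of $f^*$ with $\nabla f^*(p)$ equal to the (unique) point $x$ satisfying $p\in\partial f(x)$, and then each direction is the same one-line substitution. Your added remarks on checking $x\neq 0$ and matching the quantifier ranges are consistent with, and slightly more explicit than, the paper's argument.
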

\begin{proof}Due to the strict convexity of $f$, the sup-problem in \eqref{congj} has a unique solution, denoted by $x(y)$, which satisfies
$$0\in y -\partial f(x(y)). $$
Also, $f^*$ is differentiable since $f$ is strictly convex, and  $\nabla f^*(y) = x(y)$.

Consider problem $\min f^*(y)$, which has solution set  $\mathcal{Y}^*=\{y:\nabla f^*(y)=0\}=\{y: x(y)=0\}=\partial f(0)$.

 ``$\Longrightarrow$''
Pick $y\not\in \mathcal{Y}^*$ and let $y_{\prj}=\Proj_{\mathcal{Y}^*}(y)=\Proj_{\partial f(0)}(y)\in \mathcal{Y}^*$. From $y\in \partial f(x(y))$,
$$
\langle \nabla f^*(y)-\nabla f^*(y_{\prj}), y-y_{\prj}\rangle  = \langle x(y),y-y_{\prj}\rangle\ge L^{-1} \|y-y_{prj}\|^2,$$
where the last inequality follows from Definition \ref{rlsub}.

``$\Longleftarrow$'' Pick any $x\not=0$ and $p\in \partial f(x)$. Let  $y=p$ and $y_\prj=q=\Proj_{\partial f(0)}(p)$. Then, $\nabla f^*(y) =x$ and $\nabla f^*(y_\prj) = 0$. Then,
$$L\langle p - q,x\rangle =L\langle y - y_\prj, \nabla f^*(y)- \nabla f^*(y_{\prj})\rangle\ge \|y-y_{prj}\|^2 = \|p-q\|^2, $$
where the inequality follows from the definition of RSC.
\end{proof}

\section{Main results}\label{sc:main}
\begin{table}[ht]
\begin{center}\begin{tabular}{c|c|c|c}\hline
function & 1st-order oracle & ordinary gradient & accelerated gradient\\
class & lower bound & method & method\\\hline
~&~ &~ & ~\\
$\cL_R(\RR^n)$ & $O\left(\sqrt\frac{R}{{\epsilon}}\right)$ & Theorem \ref{thm:rlgsub}: $O\left(\frac{R}{\epsilon}\right)$ &   Theorem \ref{thm03}: $O\left(\sqrt\frac{R}{{\epsilon}}\right)$ \\[15pt]
$\cR_{R,\nu}(\RR^n)$ & $O\left(\sqrt{\frac{R}{\nu}}\log\frac{1}{\epsilon}\right)$ & Theorem \ref{thm2R}:  $O\left(\frac{R}{\nu}\log\frac{1}{\epsilon}\right)$ &  Theorem \ref{thm3}:  $O\left(\sqrt{\frac{R}{\nu}}\log\frac{1}{\epsilon}\right)$\\[10pt]\hline
\end{tabular}\end{center}
\caption{Complexities of the new classes of functions}\label{complexity2}
\end{table}
This section derives the complexity bounds for the ordinary and accelerated gradient methods under RLG and/or RSC conditions; the derived complexities are summarized in Table 2. The bounds are presented for the following error quantities:
\begin{enumerate}
\item Objective error: $\Delta_k := f(x^{(k)})-f^*$, where $f^*=\min_{x\in\RR^n}f(x)$;
\item Solution error: $r_k :=\|x^{(k)}-x^{(k)}_\prj\|=\min\{\|x^{(k)}-x^*\|:x^*\in \cX^*\}$.
\end{enumerate}
\subsection{Ordinary gradient descent}
\begin{algorithm}[htb]
\caption{Ordinary gradient descent method}  \label{alg0}
\begin{tabbing}
\textbf{Input:} Initialize $x^{(0)}\in \RR^n$ and select stepsize $h>0$.\\

1: \textbf{for} $k=0, 1, \cdots,$ \textbf{do}\\

2: \quad $x^{(k+1)}=x^{(k)}-h \,\nabla f(x^{(k)})$;\\

3: \textbf{end for}
\end{tabbing}
\vspace{-10pt}
\end{algorithm}

\begin{theorem}[Sublinear convergence for $\cL_R(\RR^n)$]\label{thm:rlgsub} Assume that in problem \eqref{opt1}, $f\in \cL_R(\RR^n)$ with $R>0$. Then Algorithm \ref{alg0} with stepsize $h\in(0, 1/R]$ converges sublinearly with
$$\Delta_k=O(\frac{R\, r_0^2}{k}),$$
where $r_0=\|x^{(0)}-x^{(0)}_\prj\|$.
It reaches  $\epsilon$-accuracy (i.e., $\Delta_k<\epsilon$) in $O(\frac{R}{\epsilon})$ iterations.
\end{theorem}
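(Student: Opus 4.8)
The plan is to run the classical sublinear analysis of gradient descent, taking care that every application of Lemma~\ref{lem01} is made at a pair of points that genuinely lies in the restricted set $\Omega$ of \eqref{Omeg}. The one subtlety, and the only place the weakened hypothesis RLG($R$) enters, is this: since the stepsize obeys $h\le 1/R$, the iterate $x^{(k+1)}=x^{(k)}-h\nabla f(x^{(k)})$ lies on the segment $\lfloor x^{(k)},\,x^{(k)}-(1/R)\nabla f(x^{(k)})\rfloor$ --- it is the convex combination of the endpoints with weight $\lambda=1-Rh\in[0,1]$ --- so $(x^{(k)},x^{(k+1)})\in\Omega$, and hence both \eqref{Key01} and \eqref{Key02} are available at each iteration. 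Everything after that is routine.

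First I would establish the one-step objective decrease. Applying \eqref{Key01} with $x=x^{(k)}$, $y=x^{(k+1)}$ and substituting $x^{(k+1)}-x^{(k)}=-h\nabla f(x^{(k)})$ gives
$$\Delta_{k+1}\ \le\ \Delta_k-h\Bigl(1-\tfrac{Rh}{2}\Bigr)\|\nabla f(x^{(k)})\|^2 .$$
Setting $c:=h\bigl(1-\tfrac{Rh}{2}\bigr)$, which is positive and satisfies $c\ge h/2$ because $Rh\le 1$, we obtain $\Delta_{k+1}\le\Delta_k-c\|\nabla f(x^{(k)})\|^2$; in particular $\{\Delta_k\}$ is non-increasing.

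Next I would show that the iterates stay within distance $r_0$ of the solution set. Fix $x^*:=x^{(0)}_\prj\in\cX^*$; expanding the square and invoking \eqref{Key02} to absorb the term $h^2\|\nabla f(x^{(k)})\|^2$,
$$\|x^{(k+1)}-x^*\|^2\ \le\ \|x^{(k)}-x^*\|^2-2h(1-Rh)\,\langle\nabla f(x^{(k)}),x^{(k)}-x^*\rangle\ \le\ \|x^{(k)}-x^*\|^2,$$
the last inequality because $h\le 1/R$ and $\langle\nabla f(x^{(k)}),x^{(k)}-x^*\rangle\ge 0$. Hence $\|x^{(k)}-x^*\|\le r_0$ for every $k$, and convexity gives $\Delta_k\le\langle\nabla f(x^{(k)}),x^{(k)}-x^*\rangle\le r_0\,\|\nabla f(x^{(k)})\|$, i.e.\ $\|\nabla f(x^{(k)})\|^2\ge\Delta_k^2/r_0^2$.

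Finally I would combine the two facts into $\Delta_{k+1}\le\Delta_k-(c/r_0^2)\Delta_k^2$. If $\Delta_k=0$ for some $k$ the conclusion is immediate (and $\Delta_j=0$ thereafter); otherwise, dividing by $\Delta_k\Delta_{k+1}>0$ and using $\Delta_{k+1}\le\Delta_k$ gives $\Delta_{k+1}^{-1}\ge\Delta_k^{-1}+c/r_0^2$, and telescoping yields $\Delta_k\le r_0^2/(ck)$. For the representative choice $h=1/R$ one has $c=1/(2R)$, so $\Delta_k\le 2Rr_0^2/k$; for general $h\in(0,1/R]$ the bound is $r_0^2/\bigl(h(1-Rh/2)k\bigr)=O(Rr_0^2/k)$ whenever $h\asymp 1/R$. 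In either case $\Delta_k<\epsilon$ once $k\ge 2Rr_0^2/\epsilon$, i.e.\ after $O(R/\epsilon)$ iterations. As indicated, there is no real difficulty beyond the calculation; the single point that must not be skipped is verifying the $\Omega$-membership that makes Lemma~\ref{lem01} applicable along the whole trajectory.
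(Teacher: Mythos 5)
Your proof is correct and follows essentially the same route as the paper's: the restricted descent inequality \eqref{Key01} gives the one-step decrease, \eqref{Key02} gives the monotone distance bound keeping the iterates within $r_0$ of the solution set (you anchor at the fixed minimizer $x^{(0)}_\prj$ rather than tracking $r_k=\|x^{(k)}-x^{(k)}_\prj\|$ as the paper does, an immaterial variant), and the recursion on $1/\Delta_k$ finishes as in the paper. Your explicit verification that $(x^{(k)},x^{(k+1)})\in\Omega$ when $h\le 1/R$, and your treatment of the case $\Delta_k=0$, merely make explicit points the paper leaves implicit.
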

\begin{proof}
Firstly, we prove that $r_k$ is non-increasing and thus uniformly bounded by $r_0$. From part 2) of Lemma \ref{lem01} and $h=\alpha/R$, where $\alpha\in(0,1]$, we have
$$ h^2\|  \nabla f(x^{(k)}) \|^2 =2\alpha h\cdot \frac{1}{2R}\|\nabla f(x^{(k)})\|^2\le 2\alpha h \langle \nabla f(x^{(k)}),x^{(k)}-x^{(k)}_\prj\rangle\le 2 h \langle \nabla f(x^{(k)}),x^{(k)}-x^{(k)}_\prj\rangle, $$
so in turn we get from $x^{(k+1)} = x^{(k)} -h  \nabla f(x^{(k)})$ that
\begin{subequations}\label{repeat0}
\begin{align}
r_{k+1}^2 =\|x^{(k+1)}-x^{(k+1)}_\prj \|^2 &\leq  \|x^{(k+1)}-x^{(k)}_\prj \|^2\\
&= \|x^{(k)}-x^{(k)}_\prj -h \nabla f(x^{(k)})\|^2\\
&= \|x^{(k)}-x^{(k)}_\prj\|^2  -2h \langle\nabla f(x^{(k)}), x^{(k)}-x^{(k)}_\prj\rangle+ h^2\|\nabla f(x^{(k)})\|^2\le r_k^2
\end{align}
\end{subequations}
%$$r_{k+1}^2 \le  r_k^2 - 2 h \langle \nabla f(x^{(k)}),x^{(k)}-x^{(k)}_\prj\rangle + h^2\|  \nabla f(x^{(k)}) \|^2\le r_k^2$$
and $r_k\le r_0$, $\forall k$.

Next, by the convexity of $f$, $\langle\nabla f(x^{(k)}),x^{(k)}-x^*\rangle\ge f(x^{k})- f^*\ge 0 $. Since $r_k\le r_0$, we have the bound$$\|\nabla f(x^{(k)})\| \ge \frac{r_k}{r_0}\|\nabla f(x^{(k)})\| \ge\frac{ |\langle\nabla f(x^{(k)}),x^{(k)}-x^*\rangle|}{r_0} \ge\frac{\Delta_k}{r_0}. $$
By part 1) of Lemma \ref{lem01}, we have
\begin{align*}
    \Delta_{k+1}& \le \Delta_k + \langle \nabla f(x^{(k)}),x^{(k+1)}-x^{(k)}\rangle +\frac{R}{2}\|x^{(k+1)}-x^{(k)}\|^2\\
    & = \Delta_k - h(1-\frac{h R}{2})   \|\nabla f(x^{(k)})\|^2\\
    & \le \Delta_k - \frac{h}{r_0^2}(1-\frac{h R}{2})  \Delta_k^2.
\end{align*}
For $h=\alpha/R$, where $0<\alpha\le 1$,  $\frac{h}{r_0^2}(1-\frac{h R}{2})=\frac{\alpha(2-\alpha)}{2 (Rr_0^2)}=O(\frac{1}{Rr_0^2}).$ Dividing the both sides of $\Delta_{k+1}\le \Delta_k - O(\frac{1}{Rr_0^2})\Delta_k^2$ by $\Delta_k \Delta_{k+1}$,  we get $ (1/\Delta_{k+1})\ge (1/\Delta_k)+O(\frac{1}{Rr_0^2})$. Therefore, $\Delta_k=O(R\, r_0^2/k)$, following from which $\Delta_k<\epsilon$ is guaranteed in $O(Rr_0^2/\epsilon)=O(R/\epsilon)$ iterations.
\end{proof}

(Restricted) Lipschitz continuity of $\nabla f$ alone cannot provide a decay rate for $r_k$. In fact, $r_k$ can decay arbitrarily slowly as function $f$ becomes arbitrarily close to being flat near its minimizer. With the addtional RSC assumptions, the theorems  below  give geometrically-decaying bounds for both $r_k$ and $\Delta_k$.
\begin{theorem}[linear convergence for $\cR_{R,\nu}$]\label{thm2R}
Assume that in problem \eqref{opt1}, $f\in \cR_{R,\nu}(\RR^n)$ with some $R,\nu>0$. Then Algorithm \ref{alg0} with stepsize $h=\frac{1}{2R}$  converges linearly with
\begin{align*}
    r_{k+1} & \le (1-\frac{\nu}{2R})^{1/2} \cdot r_k,\\
    \Delta_k& \le \frac{R}{2} r_0^2 (1-\frac{\nu}{2R})^k.
\end{align*}
    It reaches  $\epsilon$-accuracy in $O\left(\frac{R}{\nu}\log\frac{1}{\epsilon}\right)$ iterations.

Conversely, assuming that $f$ has the unique solution $x^*$ and Algorithm starts from arbitrary $x^{(0)}$ has a finite stepsize $h$, linear convergence in the form of $\|x^{(k+1)}-x^* \|^2 \leq  (1-\delta) \|x^{(k)}-x^*\|^2$ for some $0<\delta<1$ requires  $f$ to be RSC($\nu$) for some $\nu>0$.
\end{theorem}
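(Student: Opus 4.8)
\textbf{Proof plan for Theorem \ref{thm2R}.}
The forward (linear convergence) direction splits naturally into the two displayed bounds. For the solution-error bound, I would start from the chain \eqref{repeat0}, which already shows $r_{k+1}^2 \le \|x^{(k)} - x^{(k)}_\prj - h\nabla f(x^{(k)})\|^2 = r_k^2 - 2h\langle \nabla f(x^{(k)}), x^{(k)} - x^{(k)}_\prj\rangle + h^2\|\nabla f(x^{(k)})\|^2$. The plan is to bound the inner-product term from below using inequality \eqref{Key} with a well-chosen $\theta$: since $\nabla f(x^{(k)}_\prj) = 0$, \eqref{Key} gives $\langle \nabla f(x^{(k)}), x^{(k)} - x^{(k)}_\prj\rangle \ge \frac{\theta}{2R}\|\nabla f(x^{(k)})\|^2 + (1-\theta)\nu r_k^2$. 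Substituting with $h = \frac{1}{2R}$ and taking $\theta = 1$ exactly cancels the $h^2\|\nabla f(x^{(k)})\|^2$ term against $-2h\cdot\frac{1}{2R}\|\nabla f(x^{(k)})\|^2 = -h^2\|\nabla f(x^{(k)})\|^2$... but that uses up all of $\theta$ and leaves no $\nu r_k^2$ term. So instead I would keep $\theta\in(0,1)$ as a free parameter, obtain $r_{k+1}^2 \le (1 - 2h(1-\theta)\nu) r_k^2 + (h^2 - \frac{h\theta}{R})\|\nabla f(x^{(k)})\|^2$, and choose $\theta$ so that the gradient coefficient is nonpositive, i.e. $\theta \ge hR = \frac12$; taking $\theta = \frac12$ then yields $r_{k+1}^2 \le (1 - 2h\cdot\frac12\cdot\nu) r_k^2 = (1 - \frac{\nu}{2R})r_k^2$, which is exactly the claimed contraction. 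For the objective-error bound, combine $\Delta_k \le \langle \nabla f(x^{(k)}), x^{(k)} - x^{(k)}_\prj\rangle$ (convexity) with part 2) of Lemma \ref{lem01} and Lemma \ref{lem2}; more directly, $\Delta_k \le \langle\nabla f(x^{(k)}), x^{(k)}-x^{(k)}_\prj\rangle$ is not immediately $\le \frac{R}{2}r_k^2$, so I would instead use part 1) of Lemma \ref{lem01} with $y = x^{(k)}_\prj$: $f^* - f(x^{(k)}) \le \langle\nabla f(x^{(k)}), x^{(k)}_\prj - x^{(k)}\rangle + \frac{R}{2}r_k^2$, hence $\Delta_k \le \langle\nabla f(x^{(k)}), x^{(k)} - x^{(k)}_\prj\rangle \le$ (Cauchy-Schwarz and the error bound) — cleaner is to note $\Delta_k \le \langle \nabla f(x^{(k)}), x^{(k)} - x^{(k)}_\prj\rangle$ and that the proof of part 1) with the pair $(x^{(k)}, x^{(k)}_\prj)$... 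Actually the simplest route: since $x^{(k)}_\prj$ minimizes and $(x^{(k)}_\prj, x^{(k)}) \in \Omega$ is not guaranteed, I would instead bound $\Delta_k$ directly by starting the recursion at $\Delta_0 \le \frac{R}{2}r_0^2$ (from part 1) of Lemma \ref{lem01} applied along the segment from $x^{(0)}$ toward $x^{(0)}_\prj$, if that segment lies in $\Omega$, or more safely from the descent lemma form $f(x^{(0)}) - f^* \le \frac{R}{2}r_0^2$ which holds whenever $f\in\cL_R$ — this needs the segment $\lfloor x^{(0)}, x^{(0)}_\prj\rfloor$ to be admissible, a point I would verify or circumvent), and then showing $\Delta_{k}$ is non-increasing while $r_k^2$ contracts, so that $\Delta_k \le \frac{R}{2}r_k^2 \le \frac{R}{2}r_0^2(1-\frac{\nu}{2R})^k$. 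The clean statement to prove is: $\Delta_k \le \frac{R}{2}r_k^2$ for all $k$, combined with the $r_k$ contraction; I would get $\Delta_k \le \frac{R}{2}r_k^2$ from $f(x^{(k)}) - f(x^{(k)}_\prj) \le \frac{R}{2}\|x^{(k)} - x^{(k)}_\prj\|^2$, which is part 1) of Lemma \ref{lem01} provided $(x^{(k)}_\prj, x^{(k)})\in\Omega$ — the one technical wrinkle to handle carefully.

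For the iteration-complexity count, from $\Delta_k \le \frac{R}{2}r_0^2(1-\frac{\nu}{2R})^k$ I would take logarithms: $\Delta_k < \epsilon$ is guaranteed once $k \ge \frac{\log(\frac{R r_0^2}{2\epsilon})}{-\log(1 - \frac{\nu}{2R})}$, and using $-\log(1-t)\ge t$ gives $k = O(\frac{R}{\nu}\log\frac{1}{\epsilon})$, absorbing the $r_0$ and $R$ inside the log into the $O(\cdot)$.

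For the converse, suppose $f$ has a unique minimizer $x^*$ and the iteration $x^{(k+1)} = x^{(k)} - h\nabla f(x^{(k)})$ with fixed finite stepsize $h$ satisfies $\|x^{(k+1)} - x^*\|^2 \le (1-\delta)\|x^{(k)} - x^*\|^2$ for every starting point $x^{(0)}$. The plan is to expand the left side: $\|x^{(k+1)} - x^*\|^2 = \|x^{(k)} - x^*\|^2 - 2h\langle\nabla f(x^{(k)}), x^{(k)} - x^*\rangle + h^2\|\nabla f(x^{(k)})\|^2$, so the hypothesis is equivalent to $2h\langle\nabla f(x^{(k)}), x^{(k)} - x^*\rangle - h^2\|\nabla f(x^{(k)})\|^2 \ge \delta\|x^{(k)} - x^*\|^2$. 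Since this must hold for the first iterate from \emph{every} $x^{(0)}$, it holds with $x^{(k)}$ replaced by an arbitrary $x\in\RR^n$: $2h\langle\nabla f(x), x - x^*\rangle \ge \delta\|x - x^*\|^2 + h^2\|\nabla f(x)\|^2 \ge \delta\|x - x^*\|^2$, hence $\langle\nabla f(x), x - x^*\rangle \ge \frac{\delta}{2h}\|x - x^*\|^2$, which is exactly RSI($\nu$) with $\nu = \frac{\delta}{2h} > 0$ (note $x_\prj = x^*$ since the minimizer is unique). The main obstacle in the whole theorem is the minor admissibility issue in the forward direction — making sure that whenever I invoke part 1) of Lemma \ref{lem01} the relevant pair lies in $\Omega$, or else routing around it (e.g. deriving $\Delta_k\le\frac{R}{2}r_k^2$ via the integral-form argument of Lemma \ref{lem2} with $R$ in place of $\nu$ as an upper bound, which only needs RLG along the segment $\lfloor x^{(k)}, x^{(k)}_\prj\rfloor$, and verifying that segment is covered by \eqref{Omeg}); the converse is essentially a one-line rearrangement once one exploits the "for all starting points" quantifier.
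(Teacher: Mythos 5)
Your proposal follows essentially the same route as the paper's proof: the $r_k$ contraction comes from expanding $\|x^{(k+1)}-x^{(k)}_\prj\|^2$ as in \eqref{repeat0} and invoking \eqref{Key}, and your direct choice $\theta=\tfrac12$, $h=\tfrac1{2R}$ is exactly the optimizer the paper obtains from the joint minimization over $(\theta,h)$ in its Appendix; your converse argument (expand the square, drop $h^2\|\nabla f(x^{(k)})\|^2\ge 0$, use the arbitrariness of $x^{(0)}$ to get $\nu=\tfrac{\delta}{2h}$) is identical to the paper's. The ``wrinkle'' you flag is real but is not a defect of your plan relative to the paper: the paper itself obtains $\Delta_k\le\frac{R}{2}r_k^2$ by applying part 1) of Lemma \ref{lem01} to the pair $(x^{(k)}_\prj,x^{(k)})$ without checking that it lies in $\Omega$, and your proposed detour (the integral argument along $\lfloor x^{(k)},x^{(k)}_\prj\rfloor$) faces the same admissibility issue, since \eqref{Omeg} only covers segments of the form $\lfloor z,z-\tfrac1R\nabla f(z)\rfloor$. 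If you want a bound that stays strictly within $\cL_R(\RR^n)$, use part 2) of Lemma \ref{lem01} with Cauchy--Schwarz: $\frac{1}{2R}\|\nabla f(x^{(k)})\|^2\le\langle\nabla f(x^{(k)}),x^{(k)}-x^{(k)}_\prj\rangle\le\|\nabla f(x^{(k)})\|\,r_k$ gives $\|\nabla f(x^{(k)})\|\le 2R\,r_k$, hence $\Delta_k\le\langle\nabla f(x^{(k)}),x^{(k)}-x^{(k)}_\prj\rangle\le 2R\,r_k^2$; this only worsens the constant ($2R$ in place of $\tfrac R2$) and leaves the geometric rate and the $O\bigl(\tfrac R\nu\log\tfrac1\epsilon\bigr)$ complexity unchanged.
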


\begin{proof}
Recall that $x^{(k)}_\prj$ is the projection of $x^{(k)}$ onto the solution set $\mathcal{X}^*$ and $r_k=\|x^{(k)}-x^{(k)}_\prj \|$. Thus, $\nabla f(x^{(k)}_\prj)=0$.
For every $\theta\in [0,1]$ we have
\begin{subequations}\label{repeat}
\begin{align}\label{ineq0}
\|x^{(k+1)}-x^{(k+1)}_\prj \|^2
&\leq \|x^{(k)}-x^{(k)}_\prj\|^2  -2h \langle\nabla f(x^{(k)}), x^{(k)}-x^{(k)}_\prj\rangle+ h^2\|\nabla f(x^{(k)})-\nabla f(x^{(k)}_\prj)\|^2\\
\label{ineq1}
&\leq \|x^{(k)}-x^{(k)}_\prj\|^2  -2h(\frac{\theta}{2R}\|\nabla f(x^{(k)})-\nabla f(x^{(k)}_\prj)\|^2+ (1-\theta)\nu \|x^{(k)}-x^{(k)}_\prj\|^2)\\ \nonumber
&~ ~+ h^2\|\nabla f(x^{(k)})-\nabla f(x^{(k)}_\prj)\|^2\\
\label{ineq2}
&= (1-2(1-\theta)\nu h)\|x^{(k)}-x^{(k)}_\prj\|^2+(h^2-\frac{\theta h}{R})\|\nabla f(x^{(k)})-\nabla f(x^{(k)}_\prj)\|^2,
\end{align}
\end{subequations}
 where inequality (\ref{ineq0}) follows from (\ref{repeat0}) and inequality (\ref{ineq1}) utilizes (\ref{Key}). We minimize (\ref{ineq2})  over  $\theta$ and  $h$ and obtain $\theta=\frac{1}{2}$ and $h=\frac{1}{2R}$; the details can be found in Appendix. Then from (\ref{ineq2}) we get
\begin{equation}
\|x^{(k+1)}-x^{(k+1)}_\prj \|^2 \leq (1-\frac{\nu}{2R})\|x^{(k)}-x^{(k)}_\prj \|^2,
\end{equation}
i.e., $r_{k+1}  \le (1-\frac{\nu}{2R})^{1/2} \cdot r_k$.

By part 1) of Lemma \ref{lem01}, $\nabla f(x^{(k)}_\prj)=0$, and $r_{k+1}  \le (1-\nu/2R)^{1/2} \cdot r_k$,  we derive that
\begin{equation}
\Delta_k =f(x^{(k)})-f^*\leq \frac{R}{2} \|x^{(k)}-x^{(k)}_\prj\|^2 = \frac{R}{2} r_k^2 \leq \frac{R}{2}r_0^2(1-\frac{\nu}{2R})^{k},
\end{equation}
which shows $\Delta_k \le \frac{R}{2}r_0^2 (1-\frac{\nu}{2R})^k$, following from which $\Delta_k<\epsilon$ is guaranteed in $O\left(\frac{R}{\nu}\log\frac{1}{\epsilon}\right)$ iterations.

Now, we show the converse result. Since $f$ has the unique solution $x^*$, we have $x^{(k+1)}_\prj=x^{(k)}_\prj=x^*$. Noticing $x^{(k+1)}=x^{(k)}-h \nabla f(x^{(k)})$, we get
$$\|x^{(k+1)}-x^* \|^2 =  \|x^{(k)}-x^*\|^2  -2h \langle\nabla f(x^{(k)}), x^{(k)}-x^*\rangle+ h^2\|\nabla f(x^{(k)})-\nabla f(x^*)\|^2.$$ From $\|x^{(k+1)}-x^* \|^2 \leq  (1-\delta) \|x^{(k)}-x^*\|^2$ for some $0<\delta<1$, we have
$$h^2\|\nabla f(x^{(k)})-\nabla f(x^*)\|^2-2h \langle\nabla f(x^{(k)}), x^{(k)}-x^*\rangle\leq -\delta \|x^{(k)}-x^*\|^2,$$
and consequently $\langle\nabla f(x^{(k)}), x^{(k)}-x^*\rangle \geq \frac{\delta}{2h}\|x^{(k)}-x^*\|^2$ after dropping $h^2\|\nabla f(x^{(k)})-\nabla f(x^*)\|^2\ge 0$. As  $x^{(0)}$ is  arbitrary,  $f$ is RSC$(\nu)$ with $\nu=\frac{\delta}{2h}>0$.
\end{proof}

If RLG is strengthened to global Lipschitz continuity, we can take a \emph{possibly} larger stepsize $1/L$ instead of $1/(2R)$ and have \emph{possibly} better constants in the bound as follows.
\begin{theorem}[Linear convergence for $\hat{\cR}_{L,\nu}$]\label{thm:RSC+L} Assume that in problem \eqref{opt1},  $\nabla f$ is $L$-Lipschitz continuous and $f$ is RSC($\nu$) with $L,\nu>0$.  Then Algorithm \ref{alg0} with  stepsize $h=1/L$  converges linearly with
\begin{align*}
    r_{k+1} & \le (1-\nu/L)^{1/2} \cdot r_k,\\
    \Delta_k & \le \frac{L}{2}r_0^2 (1-\nu/L)^k. \end{align*}
    It reaches  $\epsilon$-accuracy in  $O\left(\frac{L}{\nu}\log\frac{1}{\epsilon}\right)$ iterations.
\end{theorem}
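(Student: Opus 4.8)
The plan is to follow the proof of Theorem \ref{thm2R} almost verbatim, with $R$ replaced by $L$, the one change being that the restricted estimate \eqref{Key02} of Lemma \ref{lem01} is upgraded to the sharp \emph{co-coercivity at a minimizer} that becomes available once $\nabla f$ is globally $L$-Lipschitz: for every $x$ and every $x^*\in\cX^*$,
\[
\langle\nabla f(x),x-x^*\rangle\ \ge\ \tfrac1L\,\|\nabla f(x)\|^2 .
\]
This one-sided Baillon--Haddad inequality improves the constant $1/(2L)$ of \eqref{Key02} (which already holds for $f\in\cL_L$) to $1/L$, and this factor of two is exactly what turns the stepsize $1/(2R)$ and rate $1-\nu/(2R)$ of Theorem \ref{thm2R} into $1/L$ and $1-\nu/L$. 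I would establish it by combining two facts valid for convex $f\in\cF_L$: (i) $f(x)-f^*\ge\frac1{2L}\|\nabla f(x)\|^2$, obtained exactly as in part 2) of Lemma \ref{lem01} by evaluating $f$ at $x-\frac1L\nabla f(x)$; and (ii) the strengthened convexity inequality $f(x^*)-f(x)-\langle\nabla f(x),x^*-x\rangle\ge\frac1{2L}\|\nabla f(x)\|^2$, which follows by applying the same ``one gradient step lowers the value by at least $\frac1{2L}\|\nabla\cdot\|^2$'' estimate to the convex $L$-smooth function $z\mapsto f(z)-\langle\nabla f(x),z\rangle$, whose minimizer is $x$. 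Adding (i) and (ii) cancels the function values and gives the claim. Step (ii) uses $L$-smoothness on segments not contained in the set $\Omega$ of Definition \ref{rlg}, which is precisely why the hypothesis here is the global $\cF_L$ condition rather than merely RLG.

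With this in hand, fix $k$, write $p:=x^{(k)}_\prj$, $r_k=\|x^{(k)}-p\|$, and recall $\nabla f(p)=0$. Since $x^{(k+1)}_\prj$ is the nearest point of $\cX^*$ to $x^{(k+1)}$, expanding $x^{(k+1)}=x^{(k)}-h\nabla f(x^{(k)})$ gives
\[
r_{k+1}^2\ \le\ \|x^{(k+1)}-p\|^2 = r_k^2 - 2h\,\langle\nabla f(x^{(k)}),x^{(k)}-p\rangle + h^2\|\nabla f(x^{(k)})\|^2 .
\]
Bounding the last term by the co-coercivity inequality, $h^2\|\nabla f(x^{(k)})\|^2\le L h^2\langle\nabla f(x^{(k)}),x^{(k)}-p\rangle$, and then bounding $\langle\nabla f(x^{(k)}),x^{(k)}-p\rangle\ge\nu r_k^2$ by RSI($\nu$), yields $r_{k+1}^2\le\bigl(1-h(2-Lh)\nu\bigr)r_k^2$; the coefficient $h(2-Lh)$ is maximized at $h=1/L$, where it equals $1/L$, so $r_{k+1}^2\le(1-\nu/L)r_k^2$, i.e. $r_{k+1}\le(1-\nu/L)^{1/2}r_k$ and hence $r_k^2\le(1-\nu/L)^k r_0^2$. (Equivalently one may keep the $\theta$-parametrized inequality \eqref{Key} with $1/(2R)$ replaced by $1/L$ and minimize over $(\theta,h)$, again obtaining $\theta=1/2$, $h=1/L$.) For the objective error, the standard descent inequality for $f\in\cF_L$ (the unrestricted version of part 1) of Lemma \ref{lem01}, proved by the same integration but without the $\Omega$-restriction), applied with base point $p\in\cX^*$ where $\nabla f(p)=0$, gives $\Delta_k=f(x^{(k)})-f^*\le\frac L2\|x^{(k)}-p\|^2=\frac L2 r_k^2\le\frac L2 r_0^2(1-\nu/L)^k$.

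Finally, $\Delta_k<\epsilon$ holds once $\frac L2 r_0^2(1-\nu/L)^k<\epsilon$, i.e. after $k=O\!\bigl(\tfrac{1}{-\log(1-\nu/L)}\log\tfrac{Lr_0^2}{\epsilon}\bigr)=O\!\bigl(\tfrac L\nu\log\tfrac1\epsilon\bigr)$ iterations (using $-\log(1-\nu/L)\ge\nu/L$ and $\nu\le L$), which is the stated complexity. The only non-mechanical ingredient is the one-sided Baillon--Haddad inequality with the sharp constant $1/L$; everything else is a transcription of the proof of Theorem \ref{thm2R} with $R$ set to $L$. I expect this to be the main obstacle precisely because it is the only place where the global (as opposed to restricted) Lipschitz hypothesis is genuinely invoked, and the only place where one must be careful to extract the better constant rather than merely substitute $R\mapsto L$.
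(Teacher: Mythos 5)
Your proposal is correct and follows essentially the same route as the paper, which proves this theorem by swapping Lemma \ref{lem01} for Nesterov's global inequalities (the descent lemma and the co-coercivity bound $\langle\nabla f(x)-\nabla f(y),x-y\rangle\ge\frac1L\|\nabla f(x)-\nabla f(y)\|^2$, cited as Theorem 2.1.5 of \cite{n2}) and then repeating the $\theta$-parametrized argument of Theorem \ref{thm2R} with $\frac{\theta}{2R}$ upgraded to $\frac{\theta}{L}$. The only differences are cosmetic: you prove the one-sided co-coercivity at a minimizer from scratch via the auxiliary function $z\mapsto f(z)-\langle\nabla f(x),z\rangle$ instead of citing it, and you do the $(\theta,h)$ bookkeeping directly at $h=1/L$ rather than re-running the appendix optimization.
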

\begin{proof}
By replacing Lemma \ref{lem01} with the following two Lemmas and repeating the arguments in Theorem \ref{thm2R}, the desired linear convergence rates can be derived.
\end{proof}

\begin{lemma}[\cite{n2} Theorem 2.1.5]\label{lem1}
If $f(x)\in \mathcal{F}_{L}(\RR^n)$, it obeys
\begin{align}
f(x)\leq f(y)+ \langle \nabla f(y), x-y\rangle +\frac{L}{2}\|x-y\|^2,&\quad\forall x, y\in\RR^n;\\
\langle \nabla f(x)-\nabla f(y), x-y\rangle \geq \frac{1}{L}\|\nabla f(x)-\nabla f(y)\|^2,&\quad\forall x, y\in\RR^n \label{ine:c2}.
\end{align}
\end{lemma}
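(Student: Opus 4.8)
The plan is to prove the two displayed inequalities separately: the first is the classical descent lemma, and the second is the co-coercivity of $\nabla f$, which I would derive from the first by an auxiliary-function argument.

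First I would establish the descent inequality by literally repeating the computation from part 1) of Lemma \ref{lem01}. The only difference is that here the line segment joining $x$ and $y$ is arbitrary, since \eqref{Lip} holds on all of $\RR^n$ rather than merely on the set $\Omega$. Concretely, write $f(x)=f(y)+\int_0^1\langle\nabla f(y+\tau(x-y)),x-y\rangle\,d\tau$, subtract $\langle\nabla f(y),x-y\rangle$, apply Cauchy--Schwarz to the integrand together with $\|\nabla f(y+\tau(x-y))-\nabla f(y)\|\le L\tau\|x-y\|$, and integrate $\int_0^1 L\tau\,d\tau=L/2$.

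Next, for the co-coercivity bound \eqref{ine:c2}, I would fix $y$ and introduce the tilted function $\phi(z):=f(z)-\langle\nabla f(y),z\rangle$. Then $\phi$ is convex, $\nabla\phi(z)=\nabla f(z)-\nabla f(y)$ is $L$-Lipschitz, and $\nabla\phi(y)=0$; since $\phi$ is convex, $y$ is therefore a global minimizer of $\phi$. Applying the descent lemma just proved, this time to $\phi$ at the gradient-step point $z-\tfrac{1}{L}\nabla\phi(z)$, gives $\phi(y)=\min\phi\le\phi\bigl(z-\tfrac{1}{L}\nabla\phi(z)\bigr)\le\phi(z)-\tfrac{1}{2L}\|\nabla\phi(z)\|^2$. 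Rearranging and setting $z=x$ yields
\[
f(x)-f(y)-\langle\nabla f(y),x-y\rangle\ \ge\ \tfrac{1}{2L}\|\nabla f(x)-\nabla f(y)\|^2 .
\]
Swapping the roles of $x$ and $y$ gives the companion inequality, and adding the two (the linear terms combine into $\langle\nabla f(x)-\nabla f(y),x-y\rangle$) produces \eqref{ine:c2}.

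I do not anticipate a real obstacle, as both parts are textbook. The one step that is not purely mechanical is recognizing that the co-coercivity estimate does not follow from convexity of $f$ alone but requires applying the descent lemma to the auxiliary function $\phi$ whose minimizer is known in closed form; convexity of $f$ enters precisely to upgrade the stationary point $y$ of $\phi$ to a global minimizer. Without convexity one only recovers the weaker one-sided bound already stated as \eqref{Key01} in Lemma \ref{lem01}.
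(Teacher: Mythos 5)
Your proof is correct: the descent inequality by integrating the gradient along the segment and bounding with the Lipschitz constant, then co-coercivity \eqref{ine:c2} via the tilted function $\phi(z)=f(z)-\langle\nabla f(y),z\rangle$ (whose minimizer is $y$ by convexity), applying the descent bound at the point $z-\tfrac{1}{L}\nabla\phi(z)$, and symmetrizing. The paper does not prove this lemma itself but cites it as Theorem 2.1.5 of Nesterov's book, and your argument is exactly that standard textbook proof, so there is nothing to reconcile.
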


\begin{lemma}
Let $\mathcal{X}^*$ be the nonempty solution set of \eqref{opt1}. If $\nabla f$ is $L$-Lipschitz continuous and  $f$ is RSC($\nu$) with  $L,\nu>0$, then for every $\theta\in [0,1]$ the following holds:
\begin{equation}\label{Key3}
\langle\nabla f(x)-\nabla f({x_\prj}), x-{x_\prj}\rangle \geq \frac{\theta}{L}\|\nabla f(x)-\nabla f({x_\prj})\|^2+ (1-\theta)\nu \|x-{x_\prj}\|^2,
\end{equation}
where $x_\prj$ denotes the projection of $x$ onto the solution set $\mathcal{X}^*$.
\end{lemma}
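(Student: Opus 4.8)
The plan is to mimic the derivation of inequality \eqref{Key} in the RLG setting, replacing the use of part~2) of Lemma~\ref{lem01} with the stronger co-coercivity estimate available under global Lipschitz continuity, namely \eqref{ine:c2} of Lemma~\ref{lem1}. Concretely, first I would combine the two ingredients separately: since $x_\prj\in\cX^*$ we have $\nabla f(x_\prj)=0$, so applying \eqref{ine:c2} with $y=x_\prj$ gives
\begin{equation*}
\langle\nabla f(x)-\nabla f(x_\prj),\,x-x_\prj\rangle\;\ge\;\frac{1}{L}\|\nabla f(x)-\nabla f(x_\prj)\|^2,
\end{equation*}
and, directly from RSI($\nu$) (which is part of the RSC($\nu$) hypothesis),
\begin{equation*}
\langle\nabla f(x)-\nabla f(x_\prj),\,x-x_\prj\rangle\;\ge\;\nu\|x-x_\prj\|^2 .
\end{equation*}

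Next I would take the convex combination of these two lower bounds with weights $\theta$ and $1-\theta$ for $\theta\in[0,1]$: multiplying the first by $\theta$, the second by $1-\theta$, and adding yields exactly \eqref{Key3}. This is the same two-line argument used to prove \eqref{Key}, with the only change being that the coefficient of the gradient term is $\theta/L$ rather than $\theta/(2R)$, reflecting that full Lipschitz continuity buys the sharper constant $1/L$ in the co-coercivity inequality instead of the $1/(2R)$ obtained from the descent lemma on a single line segment.

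There is essentially no obstacle here; the lemma is a routine repackaging, and the only point requiring a word of care is making sure the co-coercivity bound \eqref{ine:c2} is legitimately invoked at the pair $(x,x_\prj)$ — this is fine because $\nabla f$ is $L$-Lipschitz on all of $\RR^n$, so \eqref{ine:c2} holds for every pair of points, in particular for $y=x_\prj$. One should also note that $x_\prj=\Proj_{\cX^*}(x)$ is well defined since $\cX^*$ is nonempty and closed (it is nonempty by assumption, and closed as the solution set of a convex problem), so the statement is meaningful. With \eqref{Key3} in hand, the proof of Theorem~\ref{thm:RSC+L} then proceeds verbatim as in Theorem~\ref{thm2R}: one substitutes $\nabla f(x^{(k)}_\prj)=0$ into the expansion of $\|x^{(k+1)}-x^{(k+1)}_\prj\|^2$, applies \eqref{Key3} in place of \eqref{Key}, and optimizes over $\theta$ and $h$, which now gives $\theta=\tfrac12$ and $h=1/L$ and hence the contraction factor $1-\nu/L$.
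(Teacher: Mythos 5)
Your proof is correct and follows exactly the paper's route: the paper likewise obtains \eqref{Key3} by combining the co-coercivity inequality \eqref{ine:c2} (valid at the pair $(x,x_\prj)$ since $\nabla f$ is globally $L$-Lipschitz and $\nabla f(x_\prj)=0$) with the RSI inequality \eqref{eq:rsi}, taking the convex combination with weights $\theta$ and $1-\theta$. Nothing is missing; your additional remarks about well-definedness of $x_\prj$ and the downstream use in Theorem~\ref{thm:RSC+L} are consistent with the paper.
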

\begin{proof}
Inequality (\ref{Key3}) follows from inequalities (\ref{eq:rsi}) and (\ref{ine:c2}).
\end{proof}

\subsection{Accelerated gradient descent}
\begin{algorithm}
\caption{Nesterov's accelerated gradient method} \label{alg1}
\begin{tabbing}
\textbf{Input:} Initialization $y^{(0)}\in \RR^n, \theta_0=1$, and $h>0$.\\

1: \textbf{for} $k=0, 1, \cdots,$ \textbf{do}\\

2: \quad $x^{(k+1)}=y^{(k)}-h\nabla f(y^{(k)})$; \quad (negative gradient step)\\

3: \quad $\beta_{k+1}=(1-\theta_k)(\sqrt{\theta_k^2+4}-\theta_k)/2$; \quad (extrapolation weight)\\

4: \quad $y^{(k+1)}=x^{(k+1)}+\beta_{k+1}(x^{(k+1)}-x^{(k)})$; \quad (extrapolation)\\

5: \quad $\theta_{k+1}=\theta_k(\sqrt{\theta_k^2+4}-\theta_k)/2$; \quad (dampening of acceleration parameter)\\

6: \textbf{end for}
\end{tabbing}\vspace{-10pt}
\end{algorithm}
Algorithm \ref{alg1} is equivalent to  Constant Step Scheme II on Page 80 of \cite{n2} (their $\alpha_k\equiv \theta_k$,  their $q= 0$) and FISTA on Page 193 of \cite{BT} without the nonsmooth regularization function $g$ (their $t_k\equiv 1/\theta_k$\footnote{Step 5 of Algorithm \ref{alg1} satisfies $\theta_{k+1}^2=(1-\theta_{k+1})\theta_k^2$; plugging $\theta_k=1/t_k$ and $\theta_{k+1}=1/t_{k+1}$, we obtain $t_{k+1}^{-2}=(1-t_{k+1}^{-1})t_k^{-2}$, which gives step (4.2) in \cite{BT}. Also, $\beta_{k+1}$ equals $\frac{t_k-1}{t_{k+1}}$ in (4.3).}).
    \begin{theorem}\label{thm03}
    Assume that in problem \eqref{opt1}, $f\in \cL_R(\RR^n)$ with $R>0$. Then Algorithm \ref{alg1} with $h=1/R$ converges sublinearly with
\begin{equation}\label{ord1}
\Delta_k\leq \frac{4R\cdot\| x^{(1)}- x^{(1)}_\prj\|^2}{(k+1)^2}.
\end{equation}
It reaches  $\epsilon$-accuracy in $O(\sqrt{\frac{R}{\epsilon}})$ iterations.
\end{theorem}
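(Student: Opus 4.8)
The plan is to run the classical Nesterov/FISTA estimate-sequence argument and to observe that its \emph{only} appeal to global Lipschitz continuity of $\nabla f$ is a single ``sufficient-decrease'' inequality which, for this particular scheme, is always evaluated at a pair of points that the construction forces into the set $\Omega$ of Definition \ref{rlg}. Concretely, since $x^{(k+1)}=y^{(k)}-\frac1R\nabla f(y^{(k)})$, both $y^{(k)}$ and $x^{(k+1)}$ lie on the segment $\lfloor y^{(k)},\,y^{(k)}-\frac1R\nabla f(y^{(k)})\rfloor$, so $(y^{(k)},x^{(k+1)})\in\Omega$ and part 1) of Lemma \ref{lem01} applies verbatim:
\[
f(x^{(k+1)})\le f(y^{(k)})+\langle\nabla f(y^{(k)}),\,x^{(k+1)}-y^{(k)}\rangle+\frac R2\|x^{(k+1)}-y^{(k)}\|^2=f(y^{(k)})-\frac1{2R}\|\nabla f(y^{(k)})\|^2 .
\]
This is exactly the descent bound the global Lipschitz hypothesis would supply, and it is the sole place where RLG enters; everything after it is convexity and algebra.

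Combining this with convexity of $f$ at $y^{(k)}$ (i.e.\ $f(y^{(k)})\le f(x)+\langle\nabla f(y^{(k)}),y^{(k)}-x\rangle$ for all $x$), substituting $\nabla f(y^{(k)})=R(y^{(k)}-x^{(k+1)})$, and completing the square yields the one inequality that drives the whole analysis:
\[
f(x^{(k+1)})-f(x)\ \le\ \frac R2\big(\|y^{(k)}-x\|^2-\|x^{(k+1)}-x\|^2\big),\qquad\forall\,x\in\RR^n.\qquad(\star)
\]
From $(\star)$ the remainder is the textbook telescoping -- Algorithm \ref{alg1} being FISTA without the nonsmooth term, this is Beck--Teboulle's argument with $L$ replaced by $R$. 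Fixing $x^*\in\cX^*$, I would apply $(\star)$ once with $x=x^{(k)}$ and once with $x=x^*$, take the convex combination with weights $1-\theta_k$ and $\theta_k$, and express $y^{(k)}$ through the ``momentum'' point $v_k$ determined by $y^{(k)}=(1-\theta_k)x^{(k)}+\theta_k v_k$ (equivalently $v_{k+1}=x^{(k)}+\theta_k^{-1}(x^{(k+1)}-x^{(k)})$, with $v_1=x^{(1)}$ since $\theta_0=1$). Using $\theta_{k+1}^2=(1-\theta_{k+1})\theta_k^2$ and $\beta_{k+1}=(1-\theta_k)\theta_{k+1}/\theta_k$ -- both immediate from steps 3 and 5 -- the cross terms collapse and the Lyapunov quantity
\[
\mathcal{E}_k:=\frac{1}{\theta_{k-1}^2}\,\Delta_k+\frac R2\,\|v_k-x^*\|^2
\]
is seen to be non-increasing for $k\ge1$; hence $\Delta_k\le\theta_{k-1}^2\,\mathcal{E}_1$.

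Finally, writing $a_k:=\theta_k^{-1}$, the relation $\theta_k^2=(1-\theta_k)\theta_{k-1}^2$ rearranges to $a_k^2-a_k=a_{k-1}^2$, so $a_k\ge a_{k-1}+\tfrac12$ and, since $a_0=1$, $a_{k-1}\ge(k+1)/2$, i.e.\ $\theta_{k-1}^2\le 4/(k+1)^2$. One more application of $(\star)$ at index $0$ (with $x=x^*$) bounds $\mathcal{E}_1$ by $\tfrac R2\|y^{(0)}-x^*\|^2$; combined with $\Delta_k\le\theta_{k-1}^2\,\mathcal{E}_1$ and $x^*$ taken as the nearest minimizer, this yields the bound \eqref{ord1}, from which $\Delta_k<\epsilon$ follows within $O(\sqrt{R/\epsilon})$ iterations.

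The genuinely new ingredient is entirely in the first paragraph: restricted Lipschitz continuity suffices because this scheme never invokes the descent inequality off a segment of the form $\lfloor z,\,z-\frac1R\nabla f(z)\rfloor$. The main obstacle is not conceptual but computational -- the momentum bookkeeping of the second paragraph, namely pinning down $v_k$ and checking that the extrapolation step of Algorithm \ref{alg1} is exactly the update of $v_k$ for which the squared-distance terms telescope. This is the standard, somewhat delicate core of the FISTA analysis; its verification would be carried out in full in the proof (or deferred to an appendix, as with the parameter optimization in Theorem \ref{thm2R}).
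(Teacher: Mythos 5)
Your proposal is correct and follows essentially the same route as the paper's proof: the lone use of the descent inequality is at the pair $(y^{(k)},x^{(k+1)})$, which lies in $\Omega$ because $x^{(k+1)}=y^{(k)}-\frac1R\nabla f(y^{(k)})$, and the rest is the standard $v$-point/$\theta_k$ telescoping with $\theta_{k-1}<\frac{2}{k+1}$, exactly the Tseng-style argument the paper carries out (your inequality $(\star)$ is just the paper's chain before substituting $z=\theta_k x^*+(1-\theta_k)x^{(k)}$). The only difference is cosmetic: bounding $\mathcal{E}_1$ by one application of $(\star)$ at index $0$ gives $\Delta_k\le 2R\|y^{(0)}-y^{(0)}_{\prj}\|^2/(k+1)^2$, i.e.\ a bound referenced to $y^{(0)}$ with constant $2$ rather than the paper's \eqref{ord1} referenced to $x^{(1)}$ with constant $4$, which is of the same form and still yields the stated $O(\sqrt{R/\epsilon})$ complexity.
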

The proof below is self-contained and inspired by  \cite{t2}. Its $O(\sqrt{\frac{R}{\epsilon}})$ is better than $O(\frac{R}{\epsilon})$ of Theorem \ref{thm:rlgsub}.
\begin{proof}
Sequences $\{\theta_k\}$ and $\{\beta_k\}$ obey the following recursive relationships:
$$\frac{1}{\theta^2_k}=\frac{1-\theta_{k+1}}{\theta^2_{k+1}}\quad\mbox{and}\quad\beta_{k+1}=\theta_k(1-\theta_k)/(\theta_k^2+\theta_{k+1})=\theta_{k+1}(\frac{1}{\theta_k}-1).$$
Defining $x^{(0)}=0$ and $v^{(k+1)}=x^{(k)}+\frac{1}{\theta_k}(x^{(k+1)}-x^{(k)})$, we can rewrite  $y^{(k+1)}=\theta_{k+1}v^{(k+1)}+(1-\theta_{k+1})x^{(k+1)}$. From part 1) of Lemma \ref{lem01} and the convexity of $f$, for any $z\in \RR^n$ we have
\begin{subequations}
\begin{align*}
f(x^{(k+1)}) &\leq  f(y^{(k)})+\langle \nabla f(y^{(k)}), x^{(k+1)}-y^{(k)}\rangle+\frac{R}{2}\| x^{(k+1)}-y^{(k)}\|^2 \\
&\leq  (f(z)+\langle \nabla f(y^{(k)}), y^{(k)}-z \rangle) +\langle \nabla f(y^{(k)}), x^{(k+1)}-y^{(k)}\rangle +\frac{R}{2}\| x^{(k+1)}-y^{(k)}\|^2 \\
&\leq  f(z)+\langle \nabla f(y^{(k)}), x^{(k+1)}-z\rangle +\frac{R}{2}\| x^{(k+1)}-y^{(k)}\|^2 \\
&\leq  f(z)+R \langle  x^{(k+1)}-y^{(k)}, z- x^{(k+1)}\rangle +\frac{R}{2}\| x^{(k+1)}-y^{(k)}\|^2.
\end{align*}
\end{subequations}
Setting $z=\theta_k x^*+(1-\theta_k)x^{(k)}$, where $x^*\in \mathcal{X}^*$, and using the convexity of $f$, we get
\begin{equation} \label{eq:bound}
f(x^{(k+1)}) \leq \theta_k f^*+(1-\theta_k)f(x^{(k)})+R \langle  x^{(k+1)}-y^{(k)}, \theta_k x^*+(1-\theta_k)x^{(k)}- x^{(k+1)}\rangle +\frac{R}{2}\| x^{(k+1)}-y^{(k)}\|^2.
\end{equation}
Since $\theta_k x^*+(1-\theta_k)x^{(k)}- x^{(k+1)}=\theta_k(x^*-v^{(k+1)})$ and $x^{(k+1)}-y^{(k)}=\theta_k(v^{(k+1)}-v^{(k)})$, we have
%\begin{subequations}\label{eq:inn}
\begin{align*}
R \langle  x^{(k+1)}-y^{(k)}, \theta_k x^*+(1-\theta_k)x^{(k)}- x^{(k+1)}\rangle =&R\theta_k^2 \langle v^{(k+1)}-v^{(k)}, x^*-v^{(k+1)}\rangle \\
=&R \theta_k^2 \langle v^{(k+1)}-x^*, v^{(k)}- x^*\rangle-R\theta_k^2 \| v^{(k+1)}-x^*\|^2
\end{align*}
%\end{subequations}
and
\begin{equation}\label{eq:norm}
\frac{R}{2}\| x^{(k+1)}-y^{(k)}\|^2=\frac{R\theta_k^2 }{2}(\| v^{(k+1)}-x^*\|^2 +\| v^{(k)}- x^*\|^2- 2\langle v^{(k+1)}-x^*, v^{(k)}- x^*\rangle).
\end{equation}
Substituting these  equations into the last two terms of (\ref{eq:bound}), we get
\begin{equation} \label{eq:bound1}
f(x^{(k+1)}) \leq \theta_k f^*+(1-\theta_k)f(x^{(k)}) - \frac{R\theta_k^2 }{2}\| v^{(k+1)}-x^*\|^2 +\frac{R\theta_k^2 }{2} \| v^{(k)}- x^*\|^2.
\end{equation}
Reordering the terms and dividing by $\theta_k^2$ and then recursively deducing, we have
\begin{subequations}\label{eq:bound2}
\begin{align}
\frac{1}{\theta_k^2}(f(x^{(k+1)})-f^*) + \frac{R}{2}\| v^{(k+1)}-x^*\|^2 &\leq  \frac{1-\theta_k}{\theta_k^2}(f(x^{(k)})-f^*) +\frac{R}{2} \| v^{(k)}- x^*\|^2\\
&=\frac{1}{\theta_{k-1}^2} (f(x^{(k)})-f^*) +\frac{R}{2} \| v^{(k)}- x^*\|^2\\
&\leq \cdots \leq  f(x^{(1)})-f^* +\frac{R}{2} \| v^{(1)}- x^*\|^2
\end{align}
\end{subequations}
where %the equality follows from the recursive relationship $\theta^2_{k}=(1-\theta_{k})\theta^2_{k-1}$ and in
the last inequality follows from $\theta_0=1$. Since $v^{(1)}=x^{(1)}$ and $f(x^{(1)})-f^*\leq \frac{R}{2}\| x^{(1)}- x^*\|^2$ from part 1) of Lemma \ref{lem01}, %and $x^*$ is an arbitrary point in $\mathcal{X}^*$. We
we finally obtain
\begin{equation} \label{eq:bound3}
f(x^{(k+1)})-f^* \leq R \theta_k^2 \| x^{(1)}- x^*\|^2 \le R \theta_k^2 \| x^{(1)}- x^{(1)}_\prj\|^2.
\end{equation}
Finally, we derive  $\theta_k< \frac{2}{k+2}$ for $k=0,1,2,\ldots$ from which  the sublinear convergence rate (\ref{ord1}) and its corresponding complexity will follow. From $\theta_0=1$ and Step 5 of Algorithm \ref{alg1}, we have $\theta_k >0$. From $\sqrt{\theta_k^2+4}> 2$ and Step 5 again, we have $\frac{\theta_{k+1}}{\theta_k}> \frac{2-\theta_k}{2}$ and thus $\frac{1}{\theta_{k+1}}-1=\frac{\theta_{k+1}}{\theta_k^2}> \frac{1}{\theta_k}-\frac{1}{2}= (\frac{1}{\theta_k}-1)+\frac{1}{2}$. Hence, for all $k\ge 0$, we have $\frac{1}{\theta_k}-1>\frac{k}{2}$ or $\theta_k<\frac{2}{k+2}$.
\end{proof}

\begin{algorithm}
\caption{Algorithm \ref{alg1} with restarts} \label{alg2}
\begin{tabbing}
\textbf{Input:} Initialization $y^{(0, 0)}\in \RR^n, \theta_0=1$, restart interval $K$.\\

1: \textbf{for} $j=0, 1, \cdots,$ \textbf{do}\\

2: \quad obtain $x^{(j,K)}$ by running Algorithm \ref{alg1} for $K$ iterations;\\

3: \quad set $x^{(j+1,0)}=x^{(j, K)},~ y^{(j+1,0)}=x^{(j,K)}$ and $\theta_0=1$;\\

4: \textbf{end for}
\end{tabbing}\vspace{-10pt}
\end{algorithm}

\begin{theorem}\label{thm3}
Assume that in problem \eqref{opt1}, $f\in \cR_{R,\nu}(\RR^n)$ with some $R>0, \nu >0$. Then Algorithm \ref{alg2} with $h=1/R$ and $K=\sqrt{8eR/\nu}$ reaches $\epsilon$-accuracy in $\mathcal{O}(\sqrt{\frac{R}{\nu}}\log\frac{1}{\epsilon})$ iterations.
\end{theorem}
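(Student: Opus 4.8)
The plan is to run the analysis cycle-by-cycle and show that one restart cycle of $K$ iterations shrinks the objective error $\Delta := f(\cdot)-f^*$ by a fixed factor strictly less than $1$; the logarithmic iteration count then follows simply by counting cycles.

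First I would fix notation for the $j$-th cycle: write its iterates as $x^{(j,k)}$, $k=0,\dots,K$, so that $x^{(j,0)} = y^{(j,0)} = x^{(j-1,K)}$, and set $\Delta_{j,k}=f(x^{(j,k)})-f^*$ and $r_{j,k}=\|x^{(j,k)}-x^{(j,k)}_\prj\|$. Since a cycle is precisely $K$ steps of Algorithm \ref{alg1} restarted with $\theta_0=1$ at $y^{(j,0)}$, Theorem \ref{thm03} applies verbatim inside the cycle and gives $\Delta_{j,K}\le 4R\,r_{j,1}^2/(K+1)^2$. Next I would observe that, because $y^{(j,0)}=x^{(j,0)}$, the first inner iterate $x^{(j,1)}=x^{(j,0)}-\tfrac1R\nabla f(x^{(j,0)})$ is an ordinary gradient step, so the monotonicity estimate already derived in the proof of Theorem \ref{thm:rlgsub} (it uses only part 2) of Lemma \ref{lem01} and a stepsize $\le 1/R$) gives $r_{j,1}\le r_{j,0}$. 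Then I would invoke the RSC hypothesis: by Lemma \ref{lem2} applied at $x^{(j,0)}$, $\Delta_{j,0}\ge \tfrac{\nu}{2}r_{j,0}^2$. Chaining these with $\Delta_{j+1,0}=\Delta_{j,K}$ yields the per-cycle contraction $\Delta_{j+1,0}\le \tfrac{8R}{\nu(K+1)^2}\,\Delta_{j,0}$.

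With $K=\sqrt{8eR/\nu}$ (rounded up, which does not change the order), $(K+1)^2 > K^2 = 8eR/\nu$, so the contraction factor is $<1/e$; hence $\Delta_{J,0}\le e^{-J}\Delta_{0,0}$, and $\Delta_{J,0}<\epsilon$ holds once $J\ge\log(\Delta_{0,0}/\epsilon)$, i.e. after $J=O(\log\tfrac1\epsilon)$ cycles. Since each cycle costs $K=O(\sqrt{R/\nu})$ iterations, the total to reach $\epsilon$-accuracy is $JK=O(\sqrt{R/\nu}\,\log\tfrac1\epsilon)$.

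I do not expect a genuine obstacle. The only delicate point is bookkeeping: Theorem \ref{thm03} controls $\Delta_{j,K}$ through $r_{j,1}$ -- the residual \emph{after} the first gradient step -- not through the residual $r_{j,0}$ at the restart point where we actually know $\Delta_{j,0}$; bridging that gap is exactly what the one-line monotonicity argument for the initial gradient step does, and after that RSC closes the loop. One should also double-check that the restart resets $\theta_0=1$ (it does, per Step 3 of Algorithm \ref{alg2}), since the constant $4R/(k+1)^2$ in Theorem \ref{thm03} relies on $\theta_0=1$.
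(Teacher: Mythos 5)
Your proposal is correct and follows essentially the same route as the paper: apply the sublinear bound of Theorem \ref{thm03} within each restart cycle, convert the squared distance $r^2$ at the cycle start into an objective gap via Lemma \ref{lem2}, obtain the per-cycle contraction factor $\frac{8R}{\nu K^2}\le \frac{1}{e}$ with $K=\sqrt{8eR/\nu}$, and count $O(\log\frac{1}{\epsilon})$ cycles of $K$ iterations each. Your explicit bridging step $r_{j,1}\le r_{j,0}$ (via the monotonicity of the initial gradient step, as in the proof of Theorem \ref{thm:rlgsub}) is a welcome piece of care that the paper's proof passes over silently when it writes the bound directly in terms of $\|x^{(j,0)}-x^{(j,0)}_\prj\|$.
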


\begin{proof}
At  iteration $j$ of Algorithm \ref{alg2}, we have
\begin{equation}
f(x^{(j+1,0)})-f^*=f(x^{(j,K)})-f^*\leq \frac{4R\cdot\|x^{(j,0)}-x_\prj^{(j,0)}\|^2_2}{K^2}\leq \frac{8R}{\nu K^2}(f(x^{j,0})-f^*)
\end{equation}
where the first inequality follows from the convergence guarantee (\ref{ord1}) of Algorithm \ref{alg1} and the second from Lemma \ref{lem2}. After $jK$ iterations, by the setting of $K=\sqrt{8eR/\nu}$ we have
\begin{equation}
f(x^{(j,0)})-f^*\leq  (\frac{8R}{\nu K^2})^j(f(x^{0,0})-f^*)= (\frac{1}{e})^{j}(f(x^{0,0})-f^*)
\end{equation}
Thus, to obtain an $\epsilon$-solution, we only need to take $j=\mathcal{O}(\log(1/\epsilon))$ and hence the total number of iterations $jK=\mathcal{O}(\sqrt{\frac{R}{\nu}}\log\frac{1}{\epsilon})$, which completes the proof.
\end{proof}
The above result and proof were motivated by \cite{n3}. Compared to \cite{n3} and  \cite{OC}, we use weaker conditions.

\section{Application to augmented $\ell_1$ minimization}\label{sc:augl1}
\subsection{An improved convergence rate}
The augmented $\ell_1$ model \eqref{augl1}
returns an exact solution to
\begin{equation}\label{BP}
\min_x \{\|x\|_1: ~Ax=b\}
\end{equation}
provided that $\alpha$ in \eqref{augl1} is large enough. For most problems  where a sparse solution $x^*$ is expected from \eqref{BP}, such as those arising in compressive sensing, paper \cite{LY} argues that $\alpha = 10\|x^*\|_\infty$ is sufficient. The Lagrange dual of \eqref{augl1}, which is problem \eqref{augl1d}, has an unconstrained and differentiable objective function. By Example 5, the negative of the dual objective function, $-f(y)$, satisfies RSC. In addition, $f$ has an $L$-Lipschitz continuous gradient $\nabla f$  with $L= \alpha \|A\|^2$. Therefore, we can apply Theorems \ref{thm:RSC+L} and \ref{thm3} to the ordinary and accelerated gradient iterations for \eqref{augl1d}.

The gradient ascent iteration for  \eqref{augl1d} is known as the linearized Bregman algorithm (LBreg):
\begin{subequations}\label{form1}
\begin{align}\label{iter1}
&x^{(k+1)}\leftarrow\alpha \shrink (A^Ty^{(k)}),\\
&y^{(k+1)}\leftarrow y^{(k)} +h (b-Ax^{(k+1)}),\label{iter1b}
\end{align}
\end{subequations}
where $x^{(k)}$ and $y^{(k)}$ are the primal and dual variables at iteration $k$ and $h>0$ is the step size. One can verify that $(b-Ax^{(k+1)})$ is the gradient to the objective of \eqref{augl1d}. The solution set is given by
\begin{equation}\label{set1}
\mathcal{Y}^*=\{y\in\RR^m: b-\alpha A \shrink (A^Ty)=0\}=\{y\in\RR^m: \alpha \shrink (A^T{y})=x^*\}
\end{equation}
where  $x^*$ is assumed to be the unique solution to \eqref{augl1}; the derivation can be found in \cite{LY}.

Paper \cite{LY} shows
$$\|y^{(k)}-y^{(k)}_\prj\|\leq \sqrt{1-\left(\frac{\nu}{L}\right)^2}\;\|y^{(k-1)}-y^{(k-1)}_\prj\|.$$
Applying Theorem \ref{thm:RSC+L}, we obtain a tighter convergence bound:
\begin{theorem}\label{thm4}
In problem (\ref{augl1d}), assume that $A\in\RR^{m\times n}$ and $b\in\RR^m$ are nonzero and $Ax=b$ are consistent. Let $f^*$ be the optimal objective value of (\ref{augl1d}). The linearized Bregman iteration (\ref{form1}) starting from any $y^{(0)}\in \RR^n$ with step size $h_k=\frac{1}{L}$ generates a  Q-linearly converging sequence $\{y^{(k)}\}$
\begin{equation}\label{resu01}
\|y^{(k)}-y^{(k)}_\prj\|\leq \sqrt{1-\frac{\nu}{L}}\|y^{(k-1)}-y^{(k-1)}_\prj\|, \quad \forall k\ge 1.
\end{equation}
 The objective value converges R-linearly as
\begin{equation}\label{resu02}
f^*-f(y^{(k)})\leq \frac{L}{2}\|y^{(0)}-y^{(0)}_\prj\|^2(1-\frac{\nu}{L})^{k}, \quad \forall k\ge 1.
\end{equation}
Furthermore, $x^{(k)}$ converges R-linearly as
\begin{equation}\label{resu03}
\|x^{(k+1)}-x^*\| \leq L\|y^{(0)}-y^{(0)}_\prj\|(1-\frac{\nu}{L})^{k/2},\quad \forall k\ge 1,
\end{equation}
where $x^*$ is the solution to (\ref{augl1}). The results are in the global sense.
\end{theorem}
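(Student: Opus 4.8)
The plan is to recognize that the linearized Bregman iteration \eqref{form1} is exactly Algorithm~\ref{alg0} (ordinary gradient descent) applied to the convex function $g:=-f$, and then to invoke Theorem~\ref{thm:RSC+L}. First I would eliminate $x^{(k+1)}$ between \eqref{iter1} and \eqref{iter1b}: since $\nabla f(y)=b-\alpha A\shrink_1(A^Ty)$, we have $b-Ax^{(k+1)}=b-\alpha A\shrink_1(A^Ty^{(k)})=\nabla f(y^{(k)})=-\nabla g(y^{(k)})$, so the $y$-update reads $y^{(k+1)}=y^{(k)}-h\nabla g(y^{(k)})$ with $h=1/L$. Thus $\{y^{(k)}\}$ is precisely the gradient-descent sequence for $\min_y g(y)$ with stepsize $1/L$.

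Next I would verify the two hypotheses of Theorem~\ref{thm:RSC+L} for $g$. For the Lipschitz constant, $\|\nabla g(y)-\nabla g(y')\|=\alpha\|A(\shrink_1(A^Ty)-\shrink_1(A^Ty'))\|\le\alpha\|A\|\,\|A^T(y-y')\|\le\alpha\|A\|^2\,\|y-y'\|=L\|y-y'\|$, where the first inequality uses that $\shrink_1$ is non-expansive and the last step uses $\|A^T\|=\|A\|$. For restricted strong convexity, this is exactly Example~\ref{eg:augl1}: under the assumed consistency of $Ax=b$, \cite{LY} shows $g=-f$ is convex and RSC($\nu$) for some $\nu>0$; moreover its minimizer set $\cY^*$ is nonempty (again by \cite{LY}) and closed, being the preimage $\{y:\alpha\shrink_1(A^Ty)=x^*\}$ of a single point under the continuous map $y\mapsto\alpha\shrink_1(A^Ty)$. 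Theorem~\ref{thm:RSC+L} then gives directly $\|y^{(k)}-y^{(k)}_\prj\|\le(1-\nu/L)^{1/2}\|y^{(k-1)}-y^{(k-1)}_\prj\|$, which is \eqref{resu01}, and $g(y^{(k)})-g^*\le\frac{L}{2}\|y^{(0)}-y^{(0)}_\prj\|^2(1-\nu/L)^k$; since $g=-f$ and $g^*=-f^*$, the latter is exactly \eqref{resu02}.

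Finally, for \eqref{resu03} I would compare the primal iterate with the primal solution through $\shrink_1$. By \eqref{set1} every point of $\cY^*$, in particular $y^{(k)}_\prj$, satisfies $\alpha\shrink_1(A^Ty^{(k)}_\prj)=x^*$, while $x^{(k+1)}=\alpha\shrink_1(A^Ty^{(k)})$; hence $\|x^{(k+1)}-x^*\|\le\alpha\|A\|\,\|y^{(k)}-y^{(k)}_\prj\|$ by non-expansiveness of $\shrink_1$. Iterating \eqref{resu01} gives $\|y^{(k)}-y^{(k)}_\prj\|\le(1-\nu/L)^{k/2}\|y^{(0)}-y^{(0)}_\prj\|$, and bounding $\alpha\|A\|\le\alpha\|A\|^2=L$ yields \eqref{resu03}. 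The only spot that calls for a little care is this last constant: the natural estimate produces the factor $\alpha\|A\|$, and replacing it by $L$ is harmless precisely when $\|A\|\ge1$ (true under the common unit-norm-column normalization of $A$); otherwise one simply keeps $\alpha\|A\|$ in place of $L$. Beyond that there is no genuine obstacle, since the substance already resides in Example~\ref{eg:augl1} and Theorem~\ref{thm:RSC+L}, and this theorem is essentially their specialization together with the observation that the primal update $y\mapsto\alpha\shrink_1(A^Ty)$ is non-expansive up to the factor $\|A\|$.
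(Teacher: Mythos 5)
Your proposal is correct and follows essentially the same route as the paper: recognize \eqref{form1} as gradient ascent on $f$ (equivalently, Algorithm~\ref{alg0} applied to $-f$) with $h=1/L$, invoke Theorem~\ref{thm:RSC+L} together with Example~\ref{eg:augl1} to get \eqref{resu01}--\eqref{resu02}, and then convert the dual rate into a primal rate through the map $y\mapsto\alpha\shrink_1(A^Ty)$. The only place where you genuinely deviate is the last step, and there your treatment is in fact the more careful one: the paper writes $\|x^{(k+1)}-x^*\|=\|\alpha\shrink_1(A^Ty^{(k)})-\alpha\shrink_1(A^Ty^{(k)}_\prj)\|=\|\nabla f(y^{(k)})-\nabla f(y^{(k)}_\prj)\|$ and then applies $L$-Lipschitz continuity of $\nabla f$, but the true gradient is $\nabla f(y)=b-\alpha A\shrink_1(A^Ty)$, so this ``equality'' silently drops a factor of $A$; your non-expansiveness estimate $\|x^{(k+1)}-x^*\|\le\alpha\|A\|\,\|y^{(k)}-y^{(k)}_\prj\|$ is the rigorous version, and your remark that upgrading the constant $\alpha\|A\|$ to $L=\alpha\|A\|^2$ requires $\|A\|\ge 1$ (otherwise one should keep $\alpha\|A\|$, which only strengthens \eqref{resu03}) is exactly the caveat the paper's argument glosses over. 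So: same decomposition, same key ingredients, with your version fixing a small constant-tracking sloppiness in the published proof of \eqref{resu03}.
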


\begin{proof}
Due to (\ref{iter1}), (\ref{set1}), the expression $\nabla f(y)=b -\alpha \shrink(A^Ty)$,  and the Lipschitz property (\ref{Lip}) of $\nabla f(y)$, we have
\begin{subequations}
\begin{align}
\|x^{(k+1)}-x^* \| &= \|\alpha \shrink (A^Ty^{(k)})-\alpha \shrink (A^T{y^{(k)}_\prj})\|,\\
 &= \| \nabla f(y^{(k)})-\nabla f({y^{(k)}_\prj})\|,\\
 &\leq L \|y^{(k)}-{y^{(k)}_{prj}}\|.
\end{align}
\end{subequations}
which gives (\ref{resu03}). The remained results follow from Theorem \ref{thm:RSC+L} applied to $-f$.
\end{proof}

\subsection{Numerical simulation}
To demonstrate the convergence results, we compared the following algorithms for problem \eqref{augl1d}:
\begin{enumerate}
\item fixed-step gradient ascent (Algorithm \ref{alg0});
\item gradient ascent with Nesterov's acceleration (Algorithm \ref{alg1}, \cite{HMG});
\item Nesterov's acceleration with \emph{restart} (Algorithm \ref{alg3} with \emph{restart});
\item Nesterov's acceleration with \emph{skip} (Algorithm \ref{alg3} with \emph{skip}).
\end{enumerate}
\begin{algorithm}
\caption{Nesterov's accelerated gradient method with \emph{reset}} \label{alg3}
\begin{tabbing}
\textbf{Input:} Initialization $y^{(0)}\in \RR^n, \theta_0=1$, and $h>0$.\\

1: \textbf{for} $k=0, 1, \cdots,$ \textbf{do}\\

2: \quad $x^{(k+1)}=y^{(k)}-h\nabla f(y^{(k)})$; \quad (negative gradient step)\\

3: \quad If \emph{restart} then\\

4: \quad\quad $\theta_k=1$ and $\beta_{k+1}=0$;\\

5: \quad elseif \emph{skip} then\\

6: \quad\quad $\beta_{k+1}=0$;\\

7: \quad else\\

8: \quad\quad $\beta_{k+1}=(1-\theta_k)(\sqrt{\theta_k^2+4}-\theta_k)/2$; \quad (extrapolation weight)\\

9: \quad End if \\

10: \quad $y^{(k+1)}=x^{(k+1)}+\beta_{k+1}(x^{(k+1)}-x^{(k)})$; \quad (extrapolation)\\

11: \quad $\theta_{k+1}=\theta_k(\sqrt{\theta_k^2+4}-\theta_k)/2$; \quad (dampening of acceleration parameter)\\

6: \textbf{end for}
\end{tabbing}\vspace{-10pt}
\end{algorithm}
Although for  \eqref{augl1d} we can compute $K=\sqrt{8eL/\nu}$ using the lower bound of $\nu$ given in Example 5 and thus run Algorithm \ref{alg2} with restart every $K$ iterations, such $K$ was found too large. Instead, we ran Algorithm \ref{alg3}, which uses the following scheme to trigger \emph{restart}  as suggested in \cite{OC} (the inequality is given in the opposite directions for  concave maximization):
$$\textrm{Gradient scheme: }\nabla f(y^{(k-1)})^T(y^{(k)}-y^{(k-1)})<0.$$
%\begin{enumerate}
%\item Function scheme: $$f(y^{(k)})<f(y^{(k-1)});$$
%\item Gradient scheme: $$\nabla f(y^{(k-1)})^T(y^{(k)}-y^{(k-1)})<0.$$
%\end{enumerate}
We also introduce the \emph{skip} heuristic:  set $\beta_{k+1}=0$ (and make \emph{no} change to $\theta_k$).

The comparisons use two examples. Each had sparse signals $x^o$ with 512 entries, out of which 25 were nonzero entries sampled independently from the standard Gaussian distribution (Test 1, Figure \ref{fig3}(a)) or set to $\pm 1$ uniformly randomly (Test 2, Figure \ref{fig3}(b)). Both examples have the same sensing matrix $A$ with 256 rows and entries sampled independently from the standard Gaussian distribution.   We used the following parameters:  $b=Ax^o$, $\alpha=10\|x^o\|_\infty$, and $h=\frac{1}{L}=\frac{1}{\alpha \|A\|^2}$. All iterations were stopped upon $\|Ax^{(k)}-b\|<10^{-14}\|b\|$. Figure \ref{fig3} depicts the relative error  $\frac{\|x^{(k)}-x^0\|}{\|x^0\|}$ versus iteration $k$.
\begin{figure}[ht]
\centering
\subfigure[Test 1: Gaussian sparse vector recovery]{
    \includegraphics[width=0.45\textwidth]{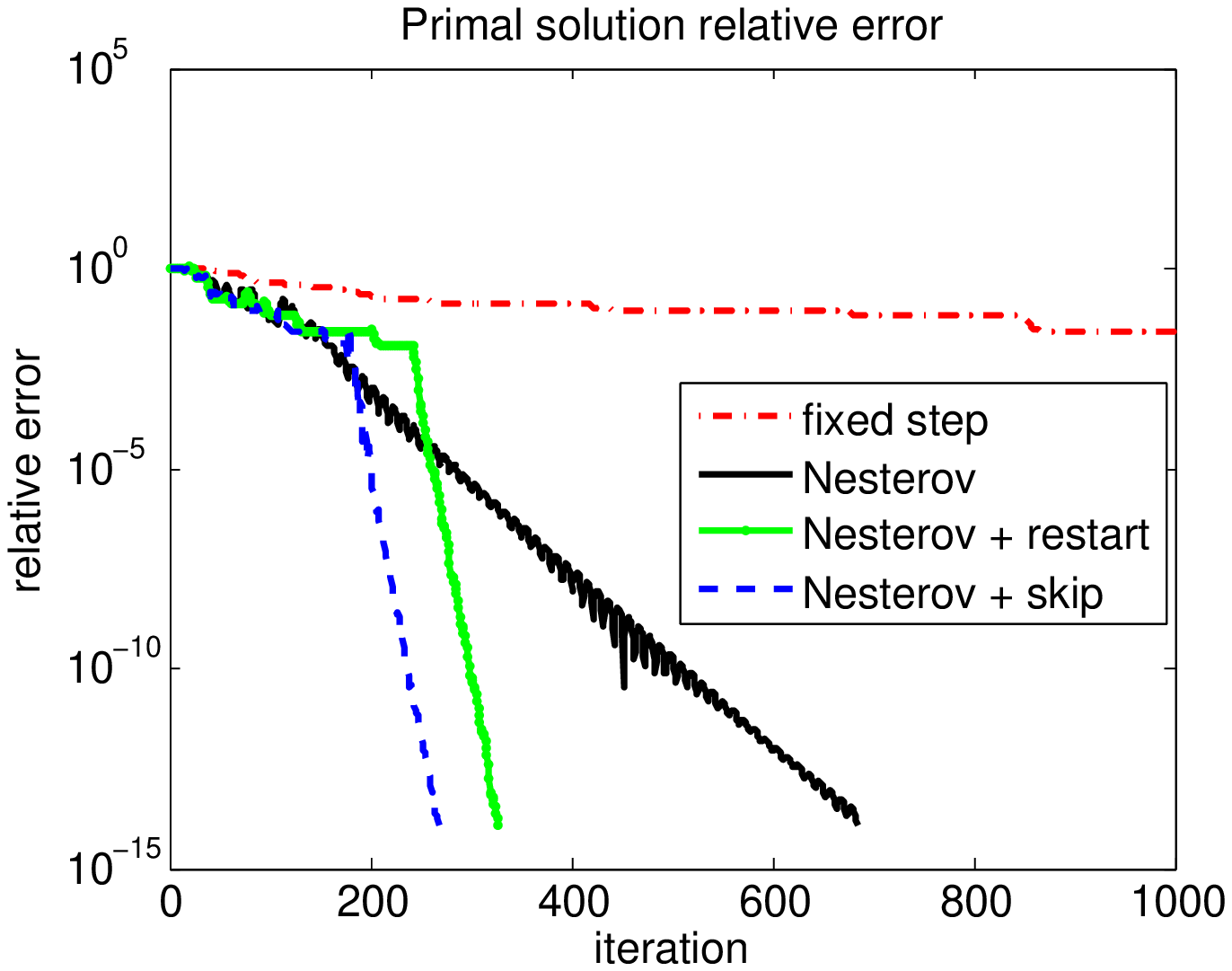}
    \label{fig:subfig1}
}
\subfigure[Test 2: Bernoulli sparse vector recovery]{
   \includegraphics[width=0.45\textwidth]{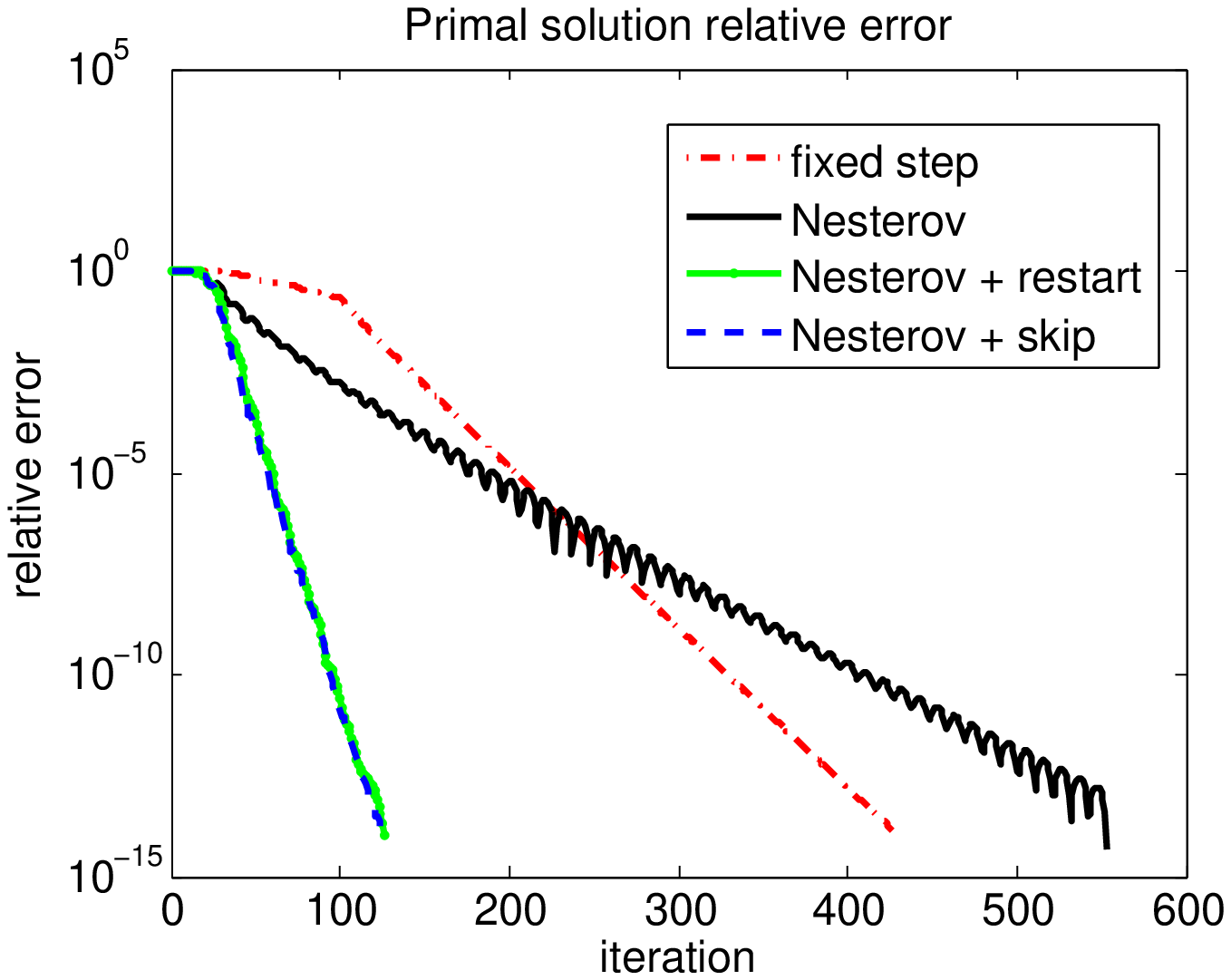}
    \label{fig:subfig2}
}
\caption{Relative error of primal variable $x^{(k)}$}
\label{fig3}
\end{figure}

The fixed-step gradient iteration converged very slowly in Test 1, much slower than in Test 2; this can be explained by a smaller $\nu$ in Test 1 (see Lemma 7 of  \cite{LY} for an explicit lower bound of $\nu$). The fixed-step iteration exhibited a linear-convergence behavior in Test 2 though we  cannot tell the same from Test 1,.

The accelerated gradient method performed similarly in both tests. Its performance was significantly improved in the second phase by \emph{restart} and \emph{skip}. In Test 1, \emph{skip} was more effective. The two schemes did not appear to make much difference in these tests. It is interesting to note that in Test 2, both \emph{restart} and \emph{skip} had  faster  rates of convergence than the fixed-step gradient iteration; this deserves further tests and perhaps theoretical investigation.

As the focus of this paper is not numerical simulation, we do not present more numerical results. For the interested reader, the source code can be found on the second author's homepage.

\section{Conclusions}\label{sc:concl}
The  convergence behavior of gradient methods on convex differentiable functions is one of the core questions in convex optimization. It is  known to many researchers that global Lipschitz continuity of $\nabla f$ is more than sufficient for sublinear convergence and asking $f$ to be strongly convex is also too much for linear convergence. For the ordinary and accelerated gradient methods, this paper shows using rather straightforward steps that these conditions restricted to certain line segments are sufficient for the existing convergence results to hold. In addition, it shows that strong convexity restricted to between current point $x$ and its projection to the solution set is also necessary for the geometric decay of solution error.

For the accelerated gradient method to achieve the best worst-case bound $O\left(\sqrt{\frac{R}{\nu}}\log\frac{1}{\epsilon}\right)$ on (restricted) strongly convex functions, the  modulus $\nu$ of the objective function must be given. This is not practical. It is an open question to design  a method with this bound but not requiring the knowledge of $\nu$. On the other hand, the \emph{restart} and \emph{skip} \emph{heuristics} appear to improve the performance of the accelerated method.

\section*{Acknowledgements}

We want to thank Profs. Q. Ling, S. Ma, Z. Wen, and Y. Zhang and graduate students Z. Peng, Y. Xu, and T. Sun for discussions and corrections. The work of H. Zhang is supported by China Scholarship Council during his visit to Rice University, and in part by Graduate School of NUDT under Funding of Innovation B110202, Hunan Provincial Innovation Foundation For Postgraduate CX2011B008, and National Science Foundation of China under Grants No. 61271014 and No.61072118. H. Zhang thanks Rice University, CAAM Department, for hosting him. The work of W. Yin is supported in part by NSF grants DMS-0748839 and ECCS-1028790, and ONR Grant N00014-08-1-1101.

\section*{Appendix}
We select the parameter $\theta$ and step size $h$ in (\ref{ineq2}) to minimize the upper bound. Let $r=\frac{\nu}{2R}, h>0$. As we need to deal with the second term in (\ref{ineq2}), two cases are studied below depending on the sign of $h^2-\frac{\theta h}{R}$:

\textbf{Case A:} $h^2-\frac{\theta h}{R}\leq 0$, i.e., $h\in (0, \frac{\theta}{R}], \theta\in[0,1]$. Applying the Cauchy-Schwartz inequality to RSI, we get
\begin{equation}
\|\nabla f(x^{(k)})-\nabla f(x_\prj^{(k)})\|^2\geq \nu^2 \|x^{(k)}-x_\prj^{(k)}\|^2.
\end{equation}
From $h^2-\frac{\theta h}{R}\leq 0$ and (\ref{ineq2}), we derive that
\begin{subequations}
\begin{align}
\|x^{(k+1)}-x_\prj^{(k+1)} \|^2 &\leq  (1-2(1-\theta)\nu h)\|x^{(k)}-x_\prj^{(k)}\|^2+\nu^2(h^2-\frac{\theta h}{R}) \|x^{(k)}-x_\prj^{(k)}\|^2,\\
 &= \left(\nu^2h^2 -2\left((1-\theta)\nu+\frac{\theta\nu^2}{2R}\right) h+1\right)\|x^{(k)}-x_\prj^{(k)}\|^2,\\
 \label{func1}
 &\triangleq f_1(\theta, h)\|x^{(k)}-x_\prj^{(k)}\|^2.
\end{align}
\end{subequations}
Let $h_0=\frac{\theta}{2R}+\frac{(1-\theta)}{\nu}$, which is the minimum point of the quadratic function $f_1(\theta, h)$ over variable $h$ for each fixed $\theta$. To determine whether such $h_0$ is included in the interval $(0, \frac{\theta}{R}]$, we consider $h_0=\frac{\theta}{2R}+\frac{(1-\theta)}{\nu}=\frac{\theta}{R}$ and get $\theta= \frac{1}{1+r}$. Now, we split the interval $[0,1]$ into $[\frac{1}{1+r}, 1]$ and $[0, \frac{1}{1+r})$. If $\theta\in [\frac{1}{1+r}, 1]$, we have $\frac{\theta}{R}\geq h_0$ which means the point $h_0\in (0, \frac{\theta}{R}]$. Thus,
$$\min_{h\leq \frac{\theta}{R}, \frac{1}{1+r}\leq \theta\leq 1}  f_1(\theta, h)=\min_{ \frac{1}{1+r}\leq \theta\leq 1}  f_1(\theta, h_0)=\min_{ \frac{1}{1+r}\leq \theta\leq 1}  1-(1-(1+r)\theta)^2= 1-r^2,$$
where the minimum value $1-r^2$ is obtained at $\theta=1$ and $h=h_0=\frac{1}{2R}$.  If $\theta\in [0, \frac{1}{1+r})$,  we have $\frac{\theta}{R}< h_0$ which means the point $h_0\notin (0, \frac{\theta}{R}]$. By monotone decreasing of $f_1(\theta, h)$ on the interval $h\leq \frac{\theta}{R}$  for each fixed $\theta$, we have
$$\min_{h\leq \frac{\theta}{R},0\leq \theta< \frac{1}{1+r}}  f_1(\theta, h)=\min_{ 0\leq \theta< \frac{1}{1+r}}  f_1(\theta, \frac{\theta}{R})=\min_{ 0\leq \theta< \frac{1}{1+r}}  1-4\theta(1-\theta)r= 1-r$$
 where the minimum value $1-r$ is obtained at $\theta=\frac{1}{2}$  and $h=\frac{\theta}{R}=\frac{1}{2R}$; note that $\frac{1}{2}\in [0, \frac{1}{1+r})$ since $r<1$. Therefore, on the intervals $h\in (0, \frac{\theta}{R}]$ and $\theta\in[0,1]$, the minimum value $1-r$ of $f_1(\theta, h)$ is obtained at $(\theta, h)=(\frac{1}{2},\frac{1}{2R})$.

\textbf{Case B:} $h^2-\frac{\theta h}{R}\geq 0$, i.e., $h\in [\frac{\theta}{R}, +\infty), \theta\in[0,1]$. Applying the Cauchy-Schwartz inequality to part 2) of Lemma \ref{lem01}, we get
\begin{equation}
\|\nabla f(x^{(k)})-\nabla f(x_\prj^{(k)})\|^2\leq 4R^2 \|x^{(k)}-x_\prj^{(k)}\|^2.
\end{equation}
From $h^2-\frac{\theta h}{R}\geq 0$ and (\ref{ineq2}), we derive that
\begin{subequations}
\begin{align}
\|x^{(k+1)}-x_\prj^{(k+1)} \|^2 &\leq  (1-2(1-\theta)\nu h)\|x^{(k)}-x_\prj^{(k)}\|^2+4R^2(h^2-\frac{\theta h}{R}) \|x^{(k)}-x_\prj^{(k)}\|^2,\\
 &= (4R^2h^2 -2(2\theta R+(1-\theta)\nu) h+1)\|x^{(k)}-x_\prj^{(k)}\|^2,\\
 &\triangleq f_2(\theta, h)\|x^{(k)}-x_\prj^{(k)}\|^2.
\end{align}
\end{subequations}
Let $h_1=\frac{2\theta R+(1-\theta)\nu}{4R^2}$, which is the minimum point of the quadratic function $f_2(\theta, h)$ over variable $h$ for each fixed $\theta$. Similarly, we split the interval $[0,1]$ into $(\frac{r}{1+r}, 1]$ and $[0, \frac{r}{1+r}]$. If $\theta\in(\frac{r}{1+r}, 1]$, we have $\frac{\theta}{R}>  h_1$ which means $h_1\notin [\frac{\theta}{R}, +\infty)$. By monotone increasing of $f_2(\theta, h)$ on the interval $h\geq \frac{\theta}{R}$  for each fixed $\theta$, we have
$$\min_{h\geq \frac{\theta}{R}, \frac{r}{1+r} <\theta\leq 1}  f_2(\theta, h)=\min_{  \frac{r}{1+r} <\theta\leq 1}  f_2(\theta, \frac{\theta}{R})=\min_{  \frac{r}{1+r} <\theta\leq 1}  1-4\theta(1-\theta)r = 1-r,$$
where the minimum value $1-r$ is obtained at $\theta=1/2$ and $h=\frac{\theta}{R}=\frac{1}{2R}$; note that $\frac{1}{2}\in (\frac{r}{1+r}, 1]$ since $r<1$.  If $\theta\in [0, \frac{r}{1+r}]$, we have $\frac{\theta}{R}\leq  h_1$ which means $h_1\in [\frac{\theta}{R}, +\infty)$. Thus,
$$\min_{h\geq \frac{\theta}{R},0\leq \theta\leq \frac{r}{1+r}}  f_2(\theta, h)=\min_{ 0\leq \theta\leq  \frac{r}{1+r}}  f_2(\theta, h_1)=\min_{ 0\leq \theta\leq  \frac{r}{1+r}}  1-(\frac{2\theta R+(1-\theta)\nu}{2R})^2=1- (\frac{2\nu}{2R+\nu})^2,$$ where the minimum value is obtained at $\theta=\frac{r}{1+r}$ and $h=h_1$. After simple calculations, it holds $r=\frac{\nu}{2R}>(\frac{2\nu}{2R+\nu})^2$ and hence $1-r< 1-(\frac{2\nu}{2R+\nu})^2$.
Therefore, on the intervals $h\in [\frac{\theta}{R}, +\infty)$ and $\theta\in[0,1]$, the minimum value $1-r$ of $f_2(\theta, h)$ is obtained at $(\theta, h)=(\frac{1}{2},\frac{1}{2R})$ as well.

\small{

}

\end{document}